\documentclass[a4paper,11pt]{article}
\usepackage{mathrsfs}
\usepackage{fancyhdr,graphicx}
\usepackage{epic}
\usepackage{epstopdf}
\usepackage{amsfonts}
\usepackage{amssymb}
\usepackage{amsthm}
\usepackage{newlfont}
\usepackage{amsmath}
\usepackage{color}
\usepackage[numbers,sort&compress]{natbib}
\numberwithin{equation}{section}
\usepackage[top=2.5cm,bottom=2.5cm,left=2.5cm,right=2.5cm]{geometry}



\newtheorem{thm}{Theorem}[section]
\newtheorem{lemma}[thm]{Lemma}

\newtheorem{con}[thm]{Conjecture}

\newtheorem{pro}[thm]{Proposition}
\newtheorem{defn}[thm]{Definition}

\newtheorem{prob}[thm]{Problem}
\newtheorem{obs}[thm]{Observation}


\usepackage{latexsym,bm}
\setlength{\baselineskip}{17pt}

\title{\vspace{-9mm}{The Existence of Graph whose Vertex Set Can be Partitioned into a Fixed Number of Domination Strong Critical Vertex-sets\thanks{This
work is supported by NSFC (12061047);  Undergraduate Innovation Training Project of Hubei Province (2021Byb004); Foundation of Cultivation of Scientific Institutions of Jianghan University (06210033).}}}

\author{Weisheng Zhao$^{1,2,}$\footnote{Corresponding author.}~, Ying Li$^{1}$,  Ruizhi Lin$^{3}$
\\\footnotesize{1. School of Artificial Intelligence, Jianghan University,}
\vspace{-0.2cm}
\\\footnotesize{Wuhan, Hubei 430056, China}
\vspace{-0.10cm}
\\\footnotesize{2. Institute for Interdisciplinary Research, Jianghan University,}
\vspace{-0.2cm}
\\\footnotesize{Wuhan, Hubei 430056, China}
\vspace{-0.10cm}
\\\footnotesize{3. School of computer science and mathematics, Fujian University}
\vspace{-0.2cm}
\\\footnotesize{of Technology, Fuzhou, Fujian, 350118, China}
\vspace{-0.05cm}
\\\small{E-mail addresses: weishengzhao101@aliyun.com,  liying1838@163.com,}
\vspace{-0.2cm}
\\\small{~linruizhi2018@fjut.edu.cn~~~~~~~~~~~~~}}

\date{}

\begin{document}

\maketitle

\vspace{-0.8cm}
\begin{abstract}
Let $\gamma(G)$ denote the domination number of a graph $G$.
A vertex $v\in V(G)$ is called a \emph{critical vertex} of $G$ if $\gamma(G-v)=\gamma(G)-1$. A graph is called \emph{vertex-critical} if every vertex of it is critical. In this paper, we correspondingly introduce two such definitions: (i) a set $S\subseteq V(G)$ is called a \emph{strong critical vertex-set} of $G$ if $\gamma(G-S)=\gamma(G)-|S|$; (ii) a graph $G$ is called \emph{strong $l$-vertex-sets-critical} if $V(G)$ can be partitioned into $l$  strong critical vertex-sets of $G$. Whereafter, we give some properties of strong $l$-vertex-sets-critical graphs by extending  the previous  results of vertex-critical graphs. As the core work, we study on the existence of this class of graphs and obtain that there exists a strong $l$-vertex-sets-critical connected graph if and only if $l\notin\{2,3,5\}$.
\vspace{0.0cm}
\end{abstract}

\noindent\textbf{~Keywords:} {\small Domination; Critical vertex; Strong critical vertex-set; Vertex-critical graph}

\noindent\textbf{~AMS 2010 Mathematics Subject Classification:} 05C69

\section{Introduction}

\vspace{-1mm}
The graphs considered in this paper are finite, undirected and simple. Let $G$ be a graph with vertex set $V(G)$ and edge set $E(G)$. For any $X\subseteq V(G)$, denote by $G[X]$ the subgraph of $G$ induced by $X$. For any $v\in V(G)$, let $d_G(v)$, $N_{G}(v)$ and $N_{G}[v]$ denote the degree, open  and closed neighborhood  of vertex $v$ in $G$, respectively. Furthermore, for any $U\subseteq V(G)$, the open and closed neighbourhood of $U$ are defined as $N_G(U)=\bigcup\limits_{v\in U}N_G(v)$ and $N_G[U]=N_G(U)\cup U$, respectively. Two graphs are \emph{disjoint} if they have no common vertices.
The \emph{union} of graphs $G$ and $H$, denoted by $G\cup H$, is a graph with $V(G\cup H)=V(G)\cup V(H)$ and $E(G\cup H)=E(G)\cup E(H)$.

 A set $M \subseteq V(G)$ is called a {\it $2$-packing} of graph $G$
if $d_{G}(x,y)>2$ for every pair of distinct vertices $x, y \in M$. A  set $D\subseteq V(G)$ is called a \emph{dominating set} of $G$ if every vertex of $G$ is either in $D$ or adjacent to a vertex of $D$.  The \emph{domination number} $\gamma(G)$ is the cardinality of a minimum dominating set of $G$. We denote by $\underline{MDS}(G)$ the set of all the minimum dominating sets. That is, $\underline{MDS}(G)=\{D \mid D$ is a minimum dominating set of $G\}$.

\subsection{Domination vertex-critical}

\begin{defn}\label{is-v}
\emph{A  vertex $v\in V(G)$ is called a}  critical vertex \emph{of $G$ if} $\gamma(G-v)=\gamma(G)-1$.
\end{defn}

\begin{obs}
For any $v\in V(G)$,

\vspace{-15pt}
\begin{equation}
\gamma(G-v)=\gamma(G)-1 \Leftrightarrow \gamma(G-v)\leq\gamma(G)-1.
\label{1.1}
\end{equation}
\end{obs}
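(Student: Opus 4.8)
The plan is to establish the biconditional by observing that the forward implication is trivial and that the reverse implication rests on a single universal inequality. Since $\gamma(G-v)=\gamma(G)-1$ obviously entails $\gamma(G-v)\leq\gamma(G)-1$, the entire content of the observation lies in the converse.

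For the reverse direction, I would first prove the general lower bound $\gamma(G-v)\geq\gamma(G)-1$, which holds for every vertex $v$ without any criticality assumption. To see this, let $D$ be a minimum dominating set of $G-v$, so that $|D|=\gamma(G-v)$. Then $D\cup\{v\}$ dominates $G$: every vertex of $G-v$ is already dominated by $D$ (deleting $v$ only removes edges, so any domination occurring inside $G-v$ still occurs in $G$), and $v$ dominates itself. Since $v\notin V(G-v)\supseteq D$, we get $|D\cup\{v\}|=|D|+1$, whence $\gamma(G)\leq\gamma(G-v)+1$, which rearranges to $\gamma(G-v)\geq\gamma(G)-1$.

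With this inequality in hand, the reverse implication is immediate: assuming $\gamma(G-v)\leq\gamma(G)-1$ and combining it with $\gamma(G-v)\geq\gamma(G)-1$ forces $\gamma(G-v)=\gamma(G)-1$, as required.

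The only genuine step is the lower bound, and even that presents no real obstacle—it is the standard fact that deleting one vertex cannot reduce the domination number by more than one, verified in two lines via the extension $D\mapsto D\cup\{v\}$. I expect no difficulty whatsoever; the observation is recorded mainly because the inequality $\gamma(G-v)\leq\gamma(G)-1$ is often more convenient to check in later arguments than the exact equality, and this equivalence lets one substitute the former for the latter freely.
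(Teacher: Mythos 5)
Your proof is correct and follows the standard argument the paper implicitly relies on: the paper states this as an observation without proof, and its generalized form $\gamma(G-S)\geq\gamma(G)-|S|$ appears later (Observation 2.2(b)), also unproved, resting on exactly your extension $D\mapsto D\cup\{v\}$. Nothing is missing; your write-up simply makes explicit what the authors took as evident.
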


\vspace{-1pt}
\begin{defn}\label{is-cG}
\emph{ A graph $G$ is called}   vertex-critical   \emph{if every vertex of $G$ is   critical}.
\end{defn}

The research on vertex-critical graph was early in \cite{Brigham,Brigham2}. Afterwards, authors studied on its diameter \cite{Fulman},  connectivity \cite{Ananchuen},  existence of perfect matching \cite{Ananchuen,Ananchuen2,{Kazemi}} and  factor critical property \cite{Ananchuen2,Wang,Wang2} in succession. Moreover, based on the right and the left  of Formula (\ref{1.1}),  Brigham~et~al.~\cite{Brigham3} and Phillips~et~al.~\cite{Phillips} extended the notion of  vertex-critical graphs by introducing $(\gamma,k)$-critical graphs and $(\gamma,t)$-critical graphs, respectively.

\vspace{9pt}
\noindent\textbf{Definition \ref{is-cG}$^\circ$.}~\cite{Brigham3}
 A graph $G$ is called \emph{$(\gamma,k)$-critical} if $\gamma(G-S)<\gamma(G)$ for every $S\subseteq V(G)$ with $|S|=k$.

\vspace{9pt}
\noindent\textbf{Definition \ref{is-cG}$^\diamond$.}~\cite{Phillips}
 A graph $G$ is called \emph{$(\gamma,t)$-critical} if $\gamma(G-S)=\gamma(G)-t$ for every 2-packing $S$ of $G$ with $|S|=t$.

\vspace{9pt}

\noindent
In Definition \ref{is-cG}$^\circ$, if $k=2$, then $G$ is called to be \emph{domination bicritical}. For more information of  $(\gamma,k)$-critical or domination bicritical graphs, readers are suggested to refer to \cite{Mojdeh,Furuya,Furuya2,Mojdeh2,Krzywkowski,Chena}.

\vspace{11pt}
Now, again based on the left of Formula (\ref{1.1}), we introduce the definition of strong critical vertex-set to extend the notion of critical vertex in the following Definition \ref{is-v}$^\prime$. (It is easy to get that a strong critical vertex-set of $G$ is also a $2$-packing of $G$.)  To compare Definition \ref{is-cG}, we give Definition \ref{is-cG}$^\prime$.

\vspace{9pt}
\noindent\textbf{Definition \ref{is-v}$^\prime$.}~\cite{Zhao}
A set $S\subseteq V(G)$ is called a \emph{strong critical vertex-set}  (or just  \emph{st-critical vertex-set}) of $G$ if $\gamma(G-S)=\gamma(G)-|S|$.

\vspace{10pt}

\noindent\textbf{Definition \ref{is-cG}$^\prime$.}
A graph $G$ is called \emph{strong~$l$-vertex-sets-critical} if $V(G)$ can be partitioned into $l$ (non-empty) strong critical vertex-sets of $G$.

\subsection{On strong critical vertex-set and two-colored $\gamma$-set}

When we talk about st-critical vertex-set, we would like to mention another related notion ------ two-colored $\gamma$-set. The present authors think that both of them are important on the problem of building family of graphs that make the equality hold in Vizing's Conjecture \cite{Hartnell0,Zhao}.

\begin{defn}\label{is-v4}\emph{\cite{Hartnell0}}
\emph{Let $D\in \underline{MDS}(G)$. $D$ is called a} two-colored \emph{$\gamma$-set of $G$ if $D$  partitions as $D=D_1 \cup D_2$ such that $V(G)-N_G[D_1]=D_2$ and $V(G)-N_G[D_2]=D_1$.}
\end{defn}

\noindent In Definition \ref{is-v4}, since $V(G)-N_G[D_1]=D_2$, we can deduce that $D_1\in \underline{MDS}(G-D_2)$. So $\gamma(G-D_2)=|D_1|=|D|-|D_2|=\gamma(G)-|D_2|$, which implies that $D_2$  is an st-critical vertex-set of $G$, and so is $D_1$ symmetrically. Because two-colored  $\gamma$-set is not the motif of this paper, we just introduce a proposition and a conjecture about it below, where in Proposition \ref{twc}, $``\square"$ represents the cartesian product and a nontrivial connected graph $G$ is called a \emph{generalized comb} if each vertex of degree greater than one is adjacent to exactly one 1-degree-vertex of $G$.

\begin{pro}\label{twc}\emph{\cite{Hartnell0}}
If $G$ is a generalized comb and $H$ has a two-colored $\gamma$-set, then $\gamma(G \square H)=\gamma(G)\gamma(H)$.
\end{pro}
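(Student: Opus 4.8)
The plan is to prove the two inequalities $\gamma(G\square H)\ge \gamma(G)\gamma(H)$ and $\gamma(G\square H)\le \gamma(G)\gamma(H)$ separately, exploiting the comb structure of $G$ for the first and the two-colored $\gamma$-set of $H$ for the second. I will write a vertex of $G\square H$ as an ordered pair $(g,h)$ with $g\in V(G)$, $h\in V(H)$.

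First I would pin down the domination structure of a generalized comb $G$. Writing $L$ for its set of $1$-degree-vertices (leaves) and $S$ for the vertices of degree greater than one, the defining property gives a bijection $s\mapsto \ell(s)$ sending each support vertex to its unique pendant leaf; say $S=\{s_1,\dots,s_m\}$ and $L=\{\ell_1,\dots,\ell_m\}$ with $\ell_i=\ell(s_i)$ and $N_G[\ell_i]=\{\ell_i,s_i\}$. On one hand $S$ dominates $G$, so $\gamma(G)\le m$; on the other hand $d_G(\ell_i,\ell_j)=d_G(s_i,s_j)+2\ge 3$ for $i\neq j$, so $L$ is a $2$-packing of size $m$, forcing $\gamma(G)\ge m$. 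Hence $\gamma(G)=m$ and, crucially, $G$ possesses a $2$-packing of cardinality $\gamma(G)$ all of whose members are leaves. (The degenerate case $G=K_2$ I would dispose of by hand.)

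For the lower bound I would run the standard packing-projection argument, which the leaf structure makes available. Let $W$ be any dominating set of $G\square H$ and put $C_i=N_G[\ell_i]\times V(H)=\{\ell_i,s_i\}\times V(H)$; since $L$ is a $2$-packing the $C_i$ are pairwise disjoint. Because $\ell_i$ has $s_i$ as its only neighbour, every vertex dominating some $(\ell_i,h)$ lies in $C_i$. Setting $A_i=\{h:(\ell_i,h)\in W\}$ and $B_i=\{h:(s_i,h)\in W\}$, the requirement that $W$ dominate $\{\ell_i\}\times V(H)$ reads $N_H[A_i]\cup B_i=V(H)$, which makes $A_i\cup B_i$ a dominating set of $H$; thus $|W\cap C_i|=|A_i|+|B_i|\ge |A_i\cup B_i|\ge\gamma(H)$. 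Summing over the disjoint $C_i$ gives $|W|\ge m\,\gamma(H)=\gamma(G)\gamma(H)$.

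The heart of the argument, and the step I expect to be the main obstacle, is the matching upper bound, where the two-colored $\gamma$-set enters (the naive product set $D\times S$ does not dominate, so a cleverer construction is needed). Let $D=D_1\cup D_2$ be a two-colored $\gamma$-set of $H$, so $|D|=\gamma(H)$, $V(H)-N_H[D_1]=D_2$ and $V(H)-N_H[D_2]=D_1$. I would place $D_1$ on every support fibre and $D_2$ on every leaf fibre, i.e. take $W=\bigcup_{i=1}^m\big(\{s_i\}\times D_1\ \cup\ \{\ell_i\}\times D_2\big)$, of size $m(|D_1|+|D_2|)=\gamma(G)\gamma(H)$. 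The colouring is exactly what verifies domination: a vertex $(s_i,h)$ is dominated inside its own fibre when $h\in N_H[D_1]$, and otherwise $h\in D_2$, so it is rescued across the pendant edge by $(\ell_i,h)\in W$; symmetrically $(\ell_i,h)$ is handled within its fibre by $D_2$, or, when $h\in D_1$, by $(s_i,h)\in W$. The subtlety to get right is precisely this failover between the two colours along the support–leaf edges, together with the fact that these cases are exhaustive, which is guaranteed by the identities $V(H)-N_H[D_1]=D_2$ and $V(H)-N_H[D_2]=D_1$. Combining the two bounds yields $\gamma(G\square H)=\gamma(G)\gamma(H)$.
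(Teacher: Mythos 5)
Your proposal is correct, and there is nothing in the paper to compare it against: Proposition \ref{twc} is stated without proof (it is imported from Hartnell and Rall \cite{Hartnell0}, the paper explicitly declining to pursue two-colored $\gamma$-sets), and your argument is the natural one for this result. Both halves check out — the leaves form a $2$-packing of size $\gamma(G)$, so the disjoint sets $C_i=\{\ell_i,s_i\}\times V(H)$ each absorb at least $\gamma(H)$ vertices of any dominating set, giving $\gamma(G\square H)\ge\gamma(G)\gamma(H)$, while the set $\bigcup_{i}\bigl(\{s_i\}\times D_1\cup\{\ell_i\}\times D_2\bigr)$ dominates exactly because of the identities $V(H)-N_H[D_1]=D_2$ and $V(H)-N_H[D_2]=D_1$ — and the $K_2$ case you defer is genuinely trivial, since the same two-colored construction works there with one endpoint playing the role of the support and the other the leaf.
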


\begin{con}\label{twc2}\emph{\cite{Hartnell4}}
If $G$ is a connected bipartite graph such that $V(G)$ can be partitioned into two-colored $\gamma$-sets, then $G$ is the $4$-cycle or $G$ can be obtained from $K_{2t,2t}$ by removing the edges of $t$ vertex-disjoint $4$-cycles.
\end{con}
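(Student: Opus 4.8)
The plan is to work inside a fixed bipartition $V(G)=A\cup B$ and to lean on the one fact the excerpt already supplies: in any two-colored $\gamma$-set $D=D_1\cup D_2$ both $D_1$ and $D_2$ are st-critical, hence each is a $2$-packing of $G$. First I would extract the local consequences for a single such $D$. Because $D_1$ is a $2$-packing, the closed neighbourhoods $\{N_G[u]:u\in D_1\}$ are pairwise disjoint, and since $N_G[D_1]=V(G)-D_2$ they partition $V(G)-D_2$; in particular $D_1$ is independent (distance $>2$ forbids edges inside it), and symmetrically $D_2$ is independent with $\{N_G[w]:w\in D_2\}$ partitioning $V(G)-D_1$. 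Combined with the absence of edges between $D_1$ and $D_2$, the whole set $D$ is independent. The first real payoff is a regularity statement. Fix a partition of $V(G)$ into two-colored $\gamma$-sets $D^{(1)},\dots,D^{(m)}$, so $m=|V(G)|/\gamma(G)$ and $m\ge 2$. A vertex $v\in D^{(j)}$ has no neighbour inside $D^{(j)}$ by independence, while for each $j'\neq j$ we have $v\in V(G)-D_2^{(j')}=N_G[D_1^{(j')}]$ with $v\notin D_1^{(j')}$, so by disjointness $v$ has exactly one neighbour in $D_1^{(j')}$ and, symmetrically, exactly one in $D_2^{(j')}$. Hence every vertex has exactly $2(m-1)$ neighbours and $G$ is $2(m-1)$-regular; being regular and bipartite, $|A|=|B|=:n$.

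Next I would clear the degenerate cases, all of which should collapse to $G=C_4$. If $m=2$ then $G$ is connected and $2$-regular, hence an even cycle $C_{2n}$, and matching $\gamma(C_{2n})=\lceil 2n/3\rceil$ against $\gamma=n$ leaves only $C_4$. The same cycle (hence $C_4$) is forced whenever some $\gamma$-set lies in one side: if $D\subseteq A$ then $D_1\subseteq A$ can dominate no vertex of $A-D$, forcing $D=A$, after which $N_G[D_1]=V(G)-D_2$ and $N_G[D_2]=V(G)-D_1$ give every vertex of $B$ exactly two neighbours, so $m=2$ again. Thus I may assume $m\ge 3$ and that every $D^{(j)}$ meets both $A$ and $B$. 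In this regime I pin down the part-sizes by double counting in the bipartite complement $\overline{G}$ (the graph on $A\cup B$ whose edges are the non-edges of $G$). Writing $a_j=|D^{(j)}\cap A|$, $b_j=|D^{(j)}\cap B|$, each vertex of $D^{(j)}\cap A$ is $G$-adjacent to exactly two vertices of $D^{(j')}\cap B$ and vice versa, so counting $\overline{G}$-edges between these parts from both ends gives $a_j(b_{j'}-2)=b_{j'}(a_j-2)$, i.e. $a_j=b_{j'}$ for all $j\neq j'$; for $m\ge 3$ this forces a single value $r$ with $a_j=b_j=r$ for every $j$, whence $\gamma(G)=2r$ and $n=mr$. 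Refining this inside one $\gamma$-set — the neighbourhoods of $D_1\cap A$ partition the $r(m-1)$ vertices of $B$ outside $D$, each of size $2(m-1)$ — yields $|D_i\cap A|=|D_i\cap B|=r/2$, so $r$ is even and each colour class is split evenly between the two sides.

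It then remains to prove $r=2$, and this is the crux and the step I expect to be the main obstacle. Granting it, the picture closes at once: each $D^{(j)}$ induces a $K_{2,2}$ in $\overline{G}$, the "exactly two neighbours in each other $\gamma$-set" count forces $\overline{G}$ to have no edges between distinct $D^{(j)}$, so $\overline{G}=m\,K_{2,2}$ with $|A|=|B|=2m$ — precisely $K_{2m,2m}$ with the edges of $m$ vertex-disjoint $4$-cycles deleted, with the empty-$\overline{G}$ reading recovering $G=K_{2,2}=C_4$. The difficulty is that all the counting identities above are satisfied formally for every even $r$, so $r\ge 4$ cannot be excluded by degree or size bookkeeping alone; the obstruction must come from the \emph{simultaneous} extremality of the two colours — that $D_1,D_2$ are \emph{minimum} dominating sets of $G-D_2,\,G-D_1$ and that $D_2$ is \emph{exactly} the set left undominated by $D_1$ — together with connectivity. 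I would attack $r=2$ by analysing how the $2$-packings $D_1^{(j)}$ of different $\gamma$-sets interlock through their disjoint, size-$2(m-1)$ neighbourhoods, expecting that once $G$ is connected a $\gamma$-set with $r\ge 4$ forces two vertices of some $D_1^{(j)}\cap A$ to share a neighbour, contradicting the $2$-packing property (or lets one shrink a dominating set below $\gamma$). Making this interlocking argument rigorous is the part I am least certain of, and is plausibly where the genuine strength of the conjecture resides.
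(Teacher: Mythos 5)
You cannot be checked against ``the paper's own proof'' here, because there is none: the statement you were given is Conjecture~\ref{twc2}, quoted by the authors from \cite{Hartnell4} and left open; the paper only restates it (the surrounding text explicitly says two-colored $\gamma$-sets are ``not the motif of this paper''). So any complete argument would be a new result, and yours is not complete. To your credit, the reductions you do carry out are correct and nontrivial: the observation that each colour class $D_i$ of a two-colored $\gamma$-set is st-critical and hence a $2$-packing is exactly the paper's remark after Definition~\ref{is-v4} combined with Lemma~\ref{cc-f}; from disjointness of the closed neighbourhoods $N[u]$, $u\in D_1$, together with $N[D_1]=V(G)-D_2$, you correctly get that each $\gamma$-set in the partition is independent and that every vertex has exactly one neighbour in each colour class of every other $\gamma$-set, so $G$ is $2(m-1)$-regular with $m=|V(G)|/\gamma(G)$; the case $m=2$ (and the case of a $\gamma$-set contained in one side) does collapse to $C_4$ via $\gamma(C_{2n})=\lceil 2n/3\rceil$; and the double counts giving $a_j=b_{j'}=r$ and $|D_i\cap A|=r/2$ are sound. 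Granting $r=2$, your endgame ($\overline{G}=m\,K_{2,2}$, i.e.\ $G=K_{2m,2m}$ minus $m$ disjoint $4$-cycles) is also correct.

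The genuine gap is the one you yourself flag: nothing in the proposal excludes $r\ge 4$, and this exclusion is not a technical loose end but the entire remaining content of the conjecture after your (standard-flavoured) regularity and counting reductions. As you observe, every identity you derive is formally consistent for each even $r$, so degree and cardinality bookkeeping provably cannot finish; what would be needed is a structural argument showing that in a connected graph the disjoint size-$2(m-1)$ neighbourhoods of the $2$-packings $D_1^{(j)}$ cannot interlock when $r\ge4$ --- forcing two vertices of some $D_1^{(j)}$ to within distance two, or producing a dominating set smaller than $\gamma(G)$ --- and your proposal offers only the expectation that such an argument exists, not the argument. Since that is precisely where the open difficulty of the Hartnell--Rall conjecture lives, the proposal should be classified as a correct partial reduction with an unproved (and, as far as this paper is concerned, unprovable-by-citation) core step, not as a proof.
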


\medskip

At last, we account for the coming two sections. We will compare the properties of vertex-critical graphs and strong $l$-vertex-sets-critical graphs in Section 2, for the reason that
st-critical vertex-set is a generalization of critical vertex as well as strong $l$-vertex-sets-critical graphs is a special kind of vertex-critical graphs. Let $\mathscr{C}_l=\{G\mid G$ is a strong $l$-vertex-sets-critical connected graph$\}$. We will obtain that $\mathscr{C}_l\neq\emptyset$ if and only if $l\notin\{2,3,5\}$ in Section 3.

\section{To compare vertex-critical and strong $l$-vertex-sets-critical}
%
Brigham~et~al.~\cite{Brigham2} studied on the vertex-critical graphs, and listed the following Theorems \ref{ab} and \ref{cc-c} without proofs because they thought the proofs are cumbersome but straightforward. In order to state these two theorems, we have to introduce the notion of vertex coalescence first.

\begin{defn}\label{vc}\emph{\cite{Brigham2,Haynes2}}
\emph{Let $G$ and $H$ be two disjoint graphs with $g\in V(G)$ and $h\in V(H)$. The} vertex coalescence \emph{$G\cdot_{gh}H$ (or just $G\cdot H$ if $g$ and $h$ are arbitrary) of $G$ and $H$ via $g$ and $h$, is the graph obtained from the  union of $G$ and $H$ by identifying the vertices $g$ and $h$ as one vertex $g$.} \emph{(Note: 1.~An example of vertex coalescence is shown in Figure \ref{gabh};~~2.~By the way, for the edge coalescence, readers can refer to \cite{Grobler}.)}
\end{defn}

\vspace{1mm}
\begin{figure}[h]
\centering
\begin{minipage}[t]{0.83\linewidth}
\centering
\includegraphics[width=0.82\textwidth]{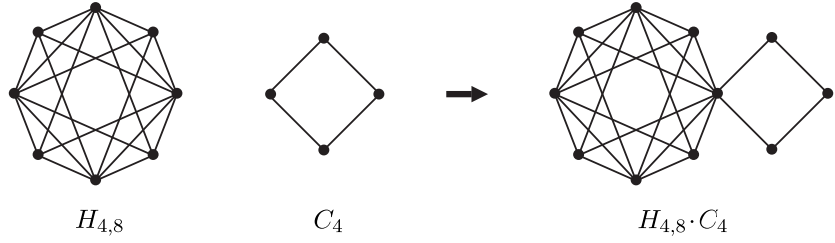}
\vspace{-2mm} \caption{The vertex coalescence of graphs $H_{4,8}$ and $C_4$.}\label{gabh}
\end{minipage}
\end{figure}

\vspace{-2mm}
\begin{thm}\label{ab}\emph{\cite{Brigham2}}
Let $G$ and $H$ be two disjoint graphs. Form any vertex coalescence $G\cdot H$. Then $\gamma(G)+\gamma(H)-1\leq\gamma(G\cdot H)\leq\gamma(G)+\gamma(H)$. Furthermore, if both $G$ and $H$ are vertex-critical or $G\cdot H$ is vertex-critical, then $\gamma(G\cdot H)=\gamma(G)+\gamma(H)-1$.
\end{thm}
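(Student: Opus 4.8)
The plan is to establish the two inequalities first, and then read off the exact value $\gamma(G)+\gamma(H)-1$ from one auxiliary fact: that it suffices for a single identifying vertex to be critical in its own factor. Throughout write $F=G\cdot_{gh}H$ and view $g$ as the cut vertex obtained by identifying $g\in V(G)$ with $h\in V(H)$; thus $V(F)=V(G)\cup(V(H)\setminus\{h\})$ is a disjoint union, and every $F$-edge between the two sides passes through $g$.

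First I would prove the bounds. For the upper bound, fix $D_G\in\underline{MDS}(G)$ and $D_H\in\underline{MDS}(H)$; their union in $F$ (identifying $h$ with $g$) dominates $F$, since any vertex of $V(H)\setminus\{h\}$ formerly dominated by $h$ is now dominated by $g$, so $\gamma(F)\le\gamma(G)+\gamma(H)$. For the lower bound, take $D\in\underline{MDS}(F)$ and set $A=D\cap V(G)$ and $B=D\cap(V(H)\setminus\{h\})$. Since a vertex of $V(G)\setminus\{g\}$ has all of its $F$-neighbours in $V(G)$, the set $A$ dominates $V(G)\setminus\{g\}$; symmetrically $B$ dominates $V(H)\setminus\{h\}$. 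The only subtle point is the shared vertex $g$, so I would split into the three exhaustive cases (i) $g\in D$, (ii) $g\notin D$ and $g$ dominated from the $G$-side, and (iii) $g$ dominated from the $H$-side. In cases (i)--(ii) the set $A$ already dominates $G$ while $B\cup\{h\}$ dominates $H$; in case (iii) the roles are reversed. Each case gives $\gamma(G)+\gamma(H)\le|A|+|B|+1=\gamma(F)+1$. The case split is essential: adding $g$ to the $G$-side and $h$ to the $H$-side simultaneously only yields $\gamma(F)\ge\gamma(G)+\gamma(H)-2$, and charging the single deficiency to whichever side fails to dominate $g$ is what recovers the sharp $-1$.

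The heart of the ``furthermore'' part is the following lemma: if $g$ is critical in $G$ (or $h$ is critical in $H$), then $\gamma(F)=\gamma(G)+\gamma(H)-1$. To see this, take $D_G^{-}\in\underline{MDS}(G-g)$ with $|D_G^{-}|=\gamma(G)-1$ and any $D_H\in\underline{MDS}(H)$; then $D_G^{-}\cup D_H$ (identifying $h$ with $g$) dominates $F$, since $G-g$ is covered by $D_G^{-}$, the side $H$ by $D_H$, and $g$ is dominated because $D_H$ dominates $h$. Hence $\gamma(F)\le\gamma(G)+\gamma(H)-1$, and the lower bound forces equality. Part (a) is then immediate: if $G$ and $H$ are vertex-critical, then $g$ is in particular critical in $G$, so the lemma applies.

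The remaining and hardest step is part (b). The right idea is to test the criticality of $F$ precisely at the cut vertex $g$. Since $F-g=(G-g)\cup(H-h)$ is a disjoint union, $\gamma(F-g)=\gamma(G-g)+\gamma(H-h)$, and vertex-criticality of $F$ gives $\gamma(F)=\gamma(G-g)+\gamma(H-h)+1$. Combining this with the upper bound $\gamma(F)\le\gamma(G)+\gamma(H)$ yields $\gamma(G-g)+\gamma(H-h)\le\gamma(G)+\gamma(H)-1$; as $\gamma(G-g)\ge\gamma(G)-1$ and $\gamma(H-h)\ge\gamma(H)-1$ always hold, at least one of these must be an equality, so $g$ is critical in $G$ or $h$ is critical in $H$. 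Applying the lemma finishes the proof. The obstacle is exactly this transfer: recognizing that the cut vertex is the one whose deletion decouples the two factors, so that its automatic criticality in $F$ converts, through the global bounds, into criticality of an identifying vertex inside a single factor.
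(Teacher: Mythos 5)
Your proof is correct, and it takes a genuinely different route from the paper's. The paper never argues Theorem~\ref{ab} directly: it recovers it as the $|X|=|Y|=1$ case of Theorem~\ref{ab}$^\prime$, whose proof rests on Lemma~\ref{Jxy} (the exact characterization of when identifying two vertices of $J=G\cup H$ preserves $\gamma$, via mutual incompatibility plus non-criticality of both vertices) together with an induction on $|X|$ for part (b). You instead work entirely at the single cut vertex: your lower bound splits a minimum dominating set of $F=G\cdot_{gh}H$ into the two sides and charges the one unit of deficiency to whichever side fails to dominate $g$; your three-case split there is not pedantry but exactly the care the general statement needs --- the paper's own proof of Theorem~\ref{ab}$^\prime$(a) asserts that $D^{\prime}\cup Y$ dominates $G\cup H$, which can fail when some $x_i\notin D^{\prime}$ has all of its dominators in $V(H)-Y$ (coalesce two copies of $P_3$ at endvertices to get $F=P_5$ with vertices $c,a,x_1,b,d$ in order and take $D^{\prime}=\{c,b\}$; then $x_1$ is undominated by $D^{\prime}\cup Y$ in $G\cup H$), so your case analysis is precisely the repair, though the inequality itself of course remains valid. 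For the ``furthermore'' part, both arguments exploit the same decoupling $F-g=(G-g)\cup(H-h)$, but you avoid Lemma~\ref{Jxy} entirely: your one-sided lemma (criticality of $g$ in $G$ alone forces $\gamma(F)=\gamma(G)+\gamma(H)-1$) is a slight strengthening of the stated hypothesis ``both $G$ and $H$ vertex-critical,'' and your pigeonhole between $\gamma(G-g)\geq\gamma(G)-1$ and $\gamma(H-h)\geq\gamma(H)-1$ replaces the paper's use of the ``only if'' direction of Lemma~\ref{Jxy} to exclude $\gamma(F)=\gamma(G)+\gamma(H)$; the paper's chain of equalities in the base case of Theorem~\ref{ab}$^\prime$(b) moreover yields criticality of both $g$ and $h$, which is the content of Theorem~\ref{cc-c}, whereas you only extract the one equality you need. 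What each approach buys: yours is elementary and self-contained, dispensing with the incompatibility machinery and the induction; the paper's buys the vertex-set generality (Theorems~\ref{ab}$^\prime$ and \ref{cc-c}$^\prime$) that the rest of the paper depends on, plus an if-and-only-if description of the extremal case $\gamma(G\cdot H)=\gamma(G)+\gamma(H)$ that your argument does not provide.
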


\begin{thm}\label{cc-c}\emph{\cite{Brigham2}}
The graph $G\cdot H$ is vertex-critical if and only if both $G$ and $H$  are vertex-critical.
\end{thm}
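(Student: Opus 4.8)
The plan is to prove both implications by transferring criticality between $W=G\cdot_{gh}H$ and its two factors, using Theorem \ref{ab} to pin down $\gamma(W)$ and the elementary inequality $\gamma(F-x)\ge\gamma(F)-1$, valid for every graph $F$ and every $x\in V(F)$ (adjoin $x$ to a minimum dominating set of $F-x$ to dominate $F$); this is exactly the inequality underlying Formula (\ref{1.1}). Writing the coalescence vertex as $g$, I record two structural reductions: deleting $g$ disconnects $W$, so $W-g=(G-g)\cup(H-h)$ and hence $\gamma(W-g)=\gamma(G-g)+\gamma(H-h)$; deleting any $v\in V(G)\setminus\{g\}$ leaves a smaller coalescence $W-v=(G-v)\cdot H$. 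In either implication the opening move is to invoke the \emph{furthermore} clause of Theorem \ref{ab} to obtain $\gamma(W)=\gamma(G)+\gamma(H)-1$, which is granted both when $W$ is vertex-critical and when $G$ and $H$ are.

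For the forward implication, assume $W$ is vertex-critical. For $v\in V(G)\setminus\{g\}$ I would combine $\gamma(W-v)=\gamma(W)-1=\gamma(G)+\gamma(H)-2$ (criticality of $v$ in $W$) with the lower bound of Theorem \ref{ab} applied to $W-v=(G-v)\cdot H$, namely $\gamma(W-v)\ge\gamma(G-v)+\gamma(H)-1$; together these force $\gamma(G-v)\le\gamma(G)-1$, hence $\gamma(G-v)=\gamma(G)-1$ by Formula (\ref{1.1}), so $v$ is critical in $G$. For the coalescence vertex I would instead use the splitting: criticality of $g$ in $W$ yields $\gamma(G-g)+\gamma(H-h)=\gamma(G)+\gamma(H)-2$, and since the summands are at least $\gamma(G)-1$ and $\gamma(H)-1$ respectively, equality forces $\gamma(G-g)=\gamma(G)-1$ and $\gamma(H-h)=\gamma(H)-1$. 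With the symmetric argument on $V(H)\setminus\{h\}$, every vertex of $G$ and of $H$ is critical.

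For the backward implication, assume $G$ and $H$ are both vertex-critical and show every vertex of $W$ is critical. The coalescence vertex is immediate from the splitting and criticality of $g$ in $G$ and $h$ in $H$. The crux is a vertex $v\in V(G)\setminus\{g\}$ (the case $v\in V(H)\setminus\{h\}$ being symmetric): here $W-v=(G-v)\cdot H$ and it suffices to prove the upper bound $\gamma(W-v)\le\gamma(G)+\gamma(H)-2$, the reverse inequality being automatic. I would exhibit such a dominating set explicitly. Let $A$ be a minimum dominating set of $G-v$, of size $\gamma(G)-1$ by criticality of $v$, and let $B$ be a minimum dominating set of $H-h$, of size $\gamma(H)-1$ by criticality of $h$. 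In $W$ the $G$-part and the $H$-part meet only in $g$, and $g\notin B$ since $B\subseteq V(H)\setminus\{h\}$, so $A$ and $B$ are disjoint and $|A\cup B|=\gamma(G)+\gamma(H)-2$. Now $A$ dominates all of $V(G)\setminus\{v\}$, in particular the coalescence vertex $g$ (as $v\ne g$), while $B$ dominates $V(H)\setminus\{h\}$; since $g$ is already covered by $A$, the set $A\cup B$ dominates every vertex of $W-v$.

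The main obstacle is precisely this last count. The natural temptation is to get the upper bound from the \emph{furthermore} clause of Theorem \ref{ab} applied to $(G-v)\cdot H$, but that clause needs both factors vertex-critical and $G-v$ need not be; meanwhile the lower bound of Theorem \ref{ab} points the wrong way, and a naive union of a minimum dominating set of $G-v$ with one of $H$ saves only one from $\gamma(G)+\gamma(H)$. The resolution is to save one on \emph{each} side at once—dominating $G-v$ and $H-h$ rather than $H$—which is legitimate only because the shared vertex $g$ must be covered just once and is supplied from the $G$-side; verifying the disjointness of $A$ and $B$ and that $g$ is genuinely dominated by $A$ are the small checks that make the bound $\gamma(G)+\gamma(H)-2$ exact.
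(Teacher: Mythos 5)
Your proof is correct, but it takes a genuinely different route from the paper's. The paper never proves Theorem \ref{cc-c} directly: it recovers it (and Theorem \ref{ab} itself) as the all-singletons special case of Theorem \ref{cc-c}$^\prime$, since a vertex $v$ is critical exactly when $\{v\}$ is an st-critical vertex-set; that route runs through Lemma \ref{Jxy}, an induction on $|X|$ in Theorem \ref{ab}$^\prime$ (b), and a dominating-set counting argument, precisely because the authors decline to assume the unproved Theorems \ref{ab} and \ref{cc-c} from \cite{Brigham2}. You instead take Theorem \ref{ab} as given and argue directly at the vertex level: the splitting $\gamma(W-g)=\gamma(G-g)+\gamma(H-h)$ at the coalescence vertex, the identity $W-v=(G-v)\cdot H$ combined with the lower bound of Theorem \ref{ab} for the forward direction, and the explicit dominating set $A\cup B$ for the backward direction; your small checks (disjointness of $A$ and $B$, domination of $g$ from the $G$-side) are all sound. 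Worth noting: your backward implication does not actually need the \emph{furthermore} clause at all, since your construction together with the universal lower bound gives $\gamma(W-v)\le\gamma(G)+\gamma(H)-2\le\gamma(W)-1$ directly; the only place you genuinely lean on that clause is the forward direction (``$W$ vertex-critical $\Rightarrow\gamma(W)=\gamma(G)+\gamma(H)-1$''), which is the nontrivial half of Theorem \ref{ab} --- in the paper's self-contained development this is exactly where the mutual-incompatibility analysis of Lemma \ref{Jxy} enters. What each approach buys: yours is short and induction-free, and shows that the proof Brigham et al.\ called ``cumbersome but straightforward'' really is straightforward once Theorem \ref{ab} is granted; the paper's detour through st-critical vertex-sets yields the strictly stronger partition statement (Theorem \ref{cc-c}$^\prime$) that the existence constructions of Section 3 require, and re-derives Theorem \ref{ab} rather than citing it.
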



\vspace{1mm}
To compare Brigham's results, we give the corresponding results on strong $l$-vertex-sets-critical graphs one to one (see Definition \ref{vc}$^\prime$,  Theorems \ref{ab}$^\prime$ and \ref{cc-c}$^\prime$). For the mathematical rigor, we are going to prove them without the supporting of Theorems \ref{ab} and \ref{cc-c}, where in fact, our proofs include the derivation of Brigham's results.  Before this, we need to display four observations, two definitions and one lemma.
\begin{obs}\label{G1G2}
Let $G$ be a graph. If $G_1$ and $G_2$ are vertex-induced subgraphs of $G$ such that $V(G)=V(G_1)\cup V(G_2)$. Then $\gamma(G)\leq \gamma(G_1)+\gamma(G_2)$ with  the equality holds if $G_1$ and $G_2$ are two components of $G$.
\end{obs}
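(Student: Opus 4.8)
The plan is to prove the inequality by exhibiting an explicit dominating set of $G$ built from minimum dominating sets of the two pieces, and then to handle the equality case by the complementary argument that any dominating set of $G$ must restrict to a dominating set on each component.

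For the inequality $\gamma(G)\leq\gamma(G_1)+\gamma(G_2)$, I would choose $D_1\in\underline{MDS}(G_1)$ and $D_2\in\underline{MDS}(G_2)$ and claim that $D_1\cup D_2$ dominates $G$. Indeed, take any $v\in V(G)=V(G_1)\cup V(G_2)$; without loss of generality $v\in V(G_1)$. Since $D_1$ dominates $G_1$, either $v\in D_1$ or $v$ is adjacent in $G_1$ to some vertex of $D_1$. Because $G_1=G[V(G_1)]$ is a \emph{vertex-induced} subgraph, any edge of $G_1$ is also an edge of $G$, so $v\in N_G[D_1]\subseteq N_G[D_1\cup D_2]$. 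The crucial use of the induced hypothesis is exactly this edge-preservation. Hence $D_1\cup D_2$ is a dominating set of $G$, giving $\gamma(G)\leq|D_1\cup D_2|\leq|D_1|+|D_2|=\gamma(G_1)+\gamma(G_2)$.

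For the equality under the assumption that $G_1$ and $G_2$ are the two components of $G$, I would argue the reverse inequality $\gamma(G)\geq\gamma(G_1)+\gamma(G_2)$. Let $D\in\underline{MDS}(G)$ and set $D_1'=D\cap V(G_1)$ and $D_2'=D\cap V(G_2)$. Since $G_1$ and $G_2$ are distinct components, there are no edges of $G$ between $V(G_1)$ and $V(G_2)$, so every vertex of $G_1$ can only be dominated by a vertex lying in $V(G_1)$; therefore $D_1'$ dominates $G_1$, and symmetrically $D_2'$ dominates $G_2$. As $V(G_1)$ and $V(G_2)$ are disjoint, this yields $|D|=|D_1'|+|D_2'|\geq\gamma(G_1)+\gamma(G_2)$. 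Combining with the inequality proved above gives $\gamma(G)=\gamma(G_1)+\gamma(G_2)$.

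This statement is elementary and I do not anticipate a genuine obstacle; the only point requiring care is to make explicit where each structural hypothesis enters. The \emph{induced} hypothesis is needed for the upper bound (so that domination inside $G_i$ survives in $G$), while the \emph{component} hypothesis is needed for the lower bound (so that a dominating set of $G$ cannot dominate a vertex of one component using a vertex of the other). Note also that the general inequality does not require $V(G_1)$ and $V(G_2)$ to be disjoint, whereas disjointness is automatic in the equality case and is what licenses the additive split $|D|=|D_1'|+|D_2'|$.
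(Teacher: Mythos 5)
Your proof is correct; the paper states this Observation without any proof, and your argument (taking $D_1\cup D_2$ for $D_i\in\underline{MDS}(G_i)$ to get the upper bound, then restricting a minimum dominating set of $G$ to each component for the equality) is exactly the standard elementary argument the authors are taking for granted. One minor imprecision in your commentary: edge-preservation holds for \emph{arbitrary} subgraphs, not only induced ones, so the upper bound does not actually need the induced hypothesis; being induced matters only in the equality direction (where it is automatic, since components are induced subgraphs), so your proof stands as written.
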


\begin{obs}\label{GS2}
\emph{(a)} For any $S\subseteq V(G)$, $\gamma(G-S)\leq\gamma(G)-|S|$ $\Leftrightarrow$ $\gamma(G-S)=\gamma(G)-|S|$.

~~~~~~~~~~~~~~~~~~~~\emph{(b)} For any $S\subseteq V(G)$,  $\gamma(G-S)\geq\gamma(G)-|S|$.
\end{obs}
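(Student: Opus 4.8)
The plan is to establish part (b) first, since part (a) will then follow essentially for free. For (b), I would pick a minimum dominating set $D_0$ of $G-S$, so that $|D_0|=\gamma(G-S)$, and argue that $D_0\cup S$ is a dominating set of $G$. Indeed, every vertex $u\in V(G)$ lies either in $S$, where it is trivially dominated by itself, or in $V(G)-S=V(G-S)$, where it is dominated by $D_0$. Since $G-S$ is an induced subgraph of $G$, the edge witnessing this domination survives in $G$, so $u$ is dominated by $D_0\cup S$ in $G$ as well. Consequently
\[
\gamma(G)\leq|D_0\cup S|\leq|D_0|+|S|=\gamma(G-S)+|S|,
\]
which rearranges to $\gamma(G-S)\geq\gamma(G)-|S|$, as required.

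For part (a), the backward implication is immediate, since an equality in particular yields the corresponding inequality $\gamma(G-S)\leq\gamma(G)-|S|$. For the forward implication, I would simply combine the hypothesis $\gamma(G-S)\leq\gamma(G)-|S|$ with the reverse inequality $\gamma(G-S)\geq\gamma(G)-|S|$ just proved in (b), forcing $\gamma(G-S)=\gamma(G)-|S|$.

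There is no real obstacle here; the only point demanding a moment of care is the claim that domination transfers correctly when passing between $G-S$ and $G$, which is exactly where the induced-subgraph structure enters (vertices of $S$ dominate themselves, and vertices outside $S$ keep their dominating neighbours from $D_0$). This observation is the natural set-valued analogue of Formula (\ref{1.1}), and it is precisely the tool that will let us freely replace the defining inequality of an st-critical vertex-set by the corresponding equality throughout the remainder of the paper.
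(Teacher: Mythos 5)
Your proof is correct: the paper states Observation~\ref{GS2} without proof, and your argument---taking $D_0\in\underline{MDS}(G-S)$ and observing that $D_0\cup S$ dominates $G$, so $\gamma(G)\leq\gamma(G-S)+|S|$, with (a) then following by combining the hypothesis with (b)---is exactly the standard reasoning the authors leave implicit. No gaps; the induced-subgraph point you flag (adjacency in $G-S$ persists in $G$) is indeed the only detail worth spelling out.
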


\begin{obs}\label{cc-s}\emph{\cite{Zhao}}
A subset of an st-critical vertex-set of $G$ is still an st-critical vertex-set of $G$.
\end{obs}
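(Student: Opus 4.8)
The plan is to reduce everything to the two monotonicity facts recorded in Observation \ref{GS2}. Let $S$ be an st-critical vertex-set of $G$, so that $\gamma(G-S)=\gamma(G)-|S|$, and fix an arbitrary $T\subseteq S$; the goal is to show $\gamma(G-T)=\gamma(G)-|T|$. By Observation \ref{GS2}(a) it suffices to establish the single inequality $\gamma(G-T)\leq\gamma(G)-|T|$, because the reverse inequality $\gamma(G-T)\geq\gamma(G)-|T|$ is automatic from Observation \ref{GS2}(b).

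The key step is to split $S$ as the disjoint union $S=T\cup R$ with $R=S\setminus T$, and to use the identity $G-S=(G-T)-R$. Applying the lower bound of Observation \ref{GS2}(b) to the graph $G-T$ with removed set $R$ gives $\gamma\bigl((G-T)-R\bigr)\geq\gamma(G-T)-|R|$, that is, $\gamma(G-S)\geq\gamma(G-T)-|R|$. Substituting the hypothesis $\gamma(G-S)=\gamma(G)-|S|=\gamma(G)-|T|-|R|$ (using $|S|=|T|+|R|$ by disjointness) and cancelling the common $-|R|$ yields precisely $\gamma(G-T)\leq\gamma(G)-|T|$, which is what was required. Then Observation \ref{GS2}(a) upgrades this to the desired equality, so $T$ is an st-critical vertex-set.

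I do not expect a genuine obstacle: the argument is a short chain of inequalities, and the only points to handle carefully are the cardinality bookkeeping $|S|=|T|+|R|$ and keeping straight the two opposite directions in Observation \ref{GS2}. The conceptual content is simply that deleting only part of $S$ cannot spoil the critical drop in the domination number, since afterwards deleting the remaining vertices $R$ can decrease $\gamma$ by at most $|R|$; reversing this reasoning forces $\gamma(G-T)$ to be as small as $\gamma(G)-|T|$.
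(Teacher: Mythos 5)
Your proof is correct: writing $S=T\cup R$ with $R=S\setminus T$, noting $(G-T)-R=G-S$, and applying the inequality of Observation~\ref{GS2}(b) in the graph $G-T$ gives $\gamma(G-T)\leq\gamma(G)-|T|$, which Observation~\ref{GS2}(a) upgrades to the required equality. The paper states Observation~\ref{cc-s} without proof (it is quoted from~\cite{Zhao}), and your short monotonicity argument is exactly the standard derivation this observation rests on, so the approach matches rather than diverges.
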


\begin{obs}\label{S1S2}
Let $S$ be an st-critical vertex-set of $G$, and $S_1,S_2\subseteq S$ with $S_1\cap S_2=\emptyset$. Then $S_1$ is an st-critical vertex-set of $G-S_2$.
\end{obs}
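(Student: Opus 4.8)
The plan is to reduce the claim to two applications of Observation~\ref{cc-s}. By definition, saying that $S_1$ is an st-critical vertex-set of $G-S_2$ means $\gamma\bigl((G-S_2)-S_1\bigr)=\gamma(G-S_2)-|S_1|$, so first I would record the set-theoretic identity $(G-S_2)-S_1=G-(S_1\cup S_2)$, which holds because $S_1$ and $S_2$ are disjoint subsets of $V(G)$. With this in hand, the whole statement hinges on expressing both $\gamma(G-S_2)$ and $\gamma\bigl(G-(S_1\cup S_2)\bigr)$ in terms of $\gamma(G)$.

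Both quantities are supplied by the hypothesis together with Observation~\ref{cc-s}. Since $S$ is an st-critical vertex-set of $G$ and $S_2\subseteq S$, Observation~\ref{cc-s} gives that $S_2$ is itself an st-critical vertex-set of $G$, i.e. $\gamma(G-S_2)=\gamma(G)-|S_2|$. Likewise, because $S_1\cap S_2=\emptyset$ we have $S_1\cup S_2\subseteq S$, so $S_1\cup S_2$ is an st-critical vertex-set of $G$ as well, yielding $\gamma\bigl(G-(S_1\cup S_2)\bigr)=\gamma(G)-|S_1\cup S_2|=\gamma(G)-|S_1|-|S_2|$, where the last equality again uses disjointness.

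Combining the two evaluations, I would conclude
\[
\gamma\bigl((G-S_2)-S_1\bigr)=\gamma\bigl(G-(S_1\cup S_2)\bigr)=\gamma(G)-|S_1|-|S_2|=\gamma(G-S_2)-|S_1|,
\]
which is exactly the assertion that $S_1$ is an st-critical vertex-set of $G-S_2$. As an alternative final step one could instead invoke Observation~\ref{GS2}: the inequality $\gamma(G-S_2)-|S_1|\le\gamma\bigl((G-S_2)-S_1\bigr)$ is automatic from part~(b) applied to the graph $G-S_2$, so it would suffice to verify only the reverse inequality, which the computation above supplies via part~(a). There is essentially no obstacle here; the sole point demanding care is the cardinality bookkeeping for the union $S_1\cup S_2$, and it is precisely the hypothesis $S_1\cap S_2=\emptyset$ that makes $|S_1\cup S_2|=|S_1|+|S_2|$ and lets the two subtractions peel off cleanly.
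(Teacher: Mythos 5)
Your proof is correct; the paper states Observation~\ref{S1S2} without proof, and your argument---applying Observation~\ref{cc-s} to $S_2$ and to $S_1\cup S_2$, then using $(G-S_2)-S_1=G-(S_1\cup S_2)$ with the cardinality identity $|S_1\cup S_2|=|S_1|+|S_2|$---is exactly the intended routine verification. One negligible wording slip: the inclusion $S_1\cup S_2\subseteq S$ follows from $S_1,S_2\subseteq S$ alone, not from disjointness, which is needed only for the cardinality count (as you correctly note at the end).
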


\begin{defn}
\emph{Let $S_1,S_2,\ldots,S_l$ be non-empty strong critical vertex-sets of $G$. If} $\{S_1,S_2, \ldots,$ $S_l\}$ \emph{is a partition of $V(G)$, then we call $\{S_1,S_2,\ldots,S_l\}$ or ${S_1\cup S_2\cup\cdots\cup S_l}$  as a} \emph{strong critical vertex-sets partition} \emph{of $G$}.
\end{defn}


\begin{defn}
\emph{Let $J$ be a graph with $x,y\in V(J)$. $x,y$ are called} mutually compatible \emph{in $J$ if there exists $D_0\in \underline{MDS}(J)$ such that $\{x,y\}\subseteq D_0$, and} mutually incompatible \emph{in $J$} \emph{if $|\{x,y\}\cap D|<2$ for any $D\in \underline{MDS}(J)$.}
\end{defn}

\begin{lemma}\label{Jxy}
Let $J$ be a graph with $x,y\in V(J)$, and $J^{\prime}$ be the graph obtained from $J$ by identifying the two vertices $x$ and $y$ as one vertex $x$. Then $\gamma(J)-1\leq \gamma(J^{\prime})\leq \gamma(J)$ with the second equality holds if and only if $x,y$ are mutually incompatible and neither   $x$ nor $y$ is critical in $J$.
\end{lemma}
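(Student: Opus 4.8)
The plan is to work throughout with the explicit description of the quotient graph: $V(J')=(V(J)\setminus\{x,y\})\cup\{x\}$, the merged vertex (again called $x$) has $N_{J'}(x)=(N_J(x)\cup N_J(y))\setminus\{x,y\}$, and every other adjacency of $J$ is inherited unchanged by $J'$. With this in hand both inequalities follow by transporting dominating sets across the identification. For $\gamma(J')\le\gamma(J)$ I would take $D\in\underline{MDS}(J)$ and push it forward by replacing $y$ with $x$ whenever $y\in D$; a short check over the three ways a vertex of $J'$ can be dominated (by a vertex other than $x,y$, by $x$, or by $y$) shows the image is a dominating set of $J'$ of size at most $|D|$. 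For $\gamma(J')\ge\gamma(J)-1$, equivalently $\gamma(J)\le\gamma(J')+1$, I would take $D''\in\underline{MDS}(J')$ and pull it back: if $x\in D''$ then $D''\cup\{y\}$ dominates $J$; if $x\notin D''$ then $D''$ already dominates every vertex other than $x,y$, and adjoining whichever of $x,y$ is not yet dominated in $J$ repairs the set at the cost of one vertex.

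For the equality clause I would prove the contrapositive in each direction, so that the target becomes: $\gamma(J')=\gamma(J)-1$ if and only if $x,y$ are mutually compatible or at least one of $x,y$ is critical. In the sufficiency direction I would exhibit, from each of the three hypotheses, a dominating set of $J'$ of size $\gamma(J)-1$. If $\{x,y\}\subseteq D_0$ for some $D_0\in\underline{MDS}(J)$, then $D_0\setminus\{y\}$ works. If $x$ is critical I would start from $D_x\in\underline{MDS}(J-x)$ of size $\gamma(J)-1$, fold $y$ into $x$ if it occurs, and verify domination of the merged vertex using that $y$ is dominated in $J-x$; the case of $y$ critical is symmetric. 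Conversely, assuming $\gamma(J')=\gamma(J)-1$, I would fix $D''\in\underline{MDS}(J')$ with $|D''|=\gamma(J)-1$ and split on whether $x\in D''$. If $x\in D''$, then the pull-back $D''\cup\{y\}$ is a dominating set of $J$ of size exactly $\gamma(J)$, hence a minimum one containing both $x$ and $y$, so $x,y$ are mutually compatible. If $x\notin D''$, then the merged vertex is dominated by some $d^{\ast}\in D''$ with $d^{\ast}x\in E(J)$ or $d^{\ast}y\in E(J)$; in the first case $D''$ dominates $J-y$, in the second it dominates $J-x$, and Formula (\ref{1.1}) then makes $y$, respectively $x$, critical.

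The delicate step is this final $x\notin D''$ analysis, where I must keep careful track of which edges persist under identification versus under vertex deletion: an edge joining two vertices different from $x,y$ behaves identically in $J$, $J'$, $J-x$ and $J-y$, but an edge at the merged vertex of $J'$ records only that $d^{\ast}$ meets $x$ or $y$ in $J$, and the whole dichotomy (criticality of $y$ versus of $x$) hinges on which of these two adjacencies is responsible. The mild asymmetry introduced by naming the merged vertex $x$ rather than $y$ must also be handled symmetrically in the criticality argument. Once this bookkeeping is organized, everything else reduces to routine domination verifications together with the criticality equivalence in Formula (\ref{1.1}).
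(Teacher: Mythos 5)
Your proposal is correct and follows essentially the same route as the paper: both inequalities by transporting dominating sets across the identification (replacing $y$ by $x$ going forward, adjoining a vertex going back), and the equality clause by the same case split on whether the merged vertex lies in a minimum dominating set of $J'$, merely reorganized as a contrapositive. In fact your pull-back step (adjoining \emph{whichever} of $x,y$ is undominated when $x\notin D''$) is slightly more careful than the paper's blanket use of $D'\cup\{y\}$, which as stated can fail to dominate $x$ when $x\notin D'$ and the merged vertex is dominated only through a $J$-neighbor of $y$.
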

\begin{proof}
Let $D^{\prime}\in \underline{MDS}(J^{\prime})$. Then $D^{\prime}\cup\{y\}$ is a dominating set of $J$, and so $\gamma(J)\leq|D^{\prime}\cup\{y\}|\leq\gamma(J^{\prime})+1$. Let $D\in \underline{MDS}(J)$ and
\begin{equation*}
D_0^{\prime}=
\begin{cases}
D, &~\text{if~$y\notin D;$}\\
(D-\{y\})\cup\{x\}, &~\text{if~$y\in D$.}
\end{cases}
\end{equation*}
Then $D_0^{\prime}$ is a dominating set of $J^{\prime}$, and so $\gamma(J^{\prime})\leq|D_0^{\prime}|\leq|D|=\gamma(J)$.

\smallskip
Furthermore, suppose that $x,y$ are mutually incompatible and neither of $x$ nor $y$ is critical in $J$. We need to prove that $\gamma(J^{\prime})=\gamma(J)$. Let $D^{\prime}\in \underline{MDS}(J^{\prime})$. There are two cases. If $x\in D^{\prime}$, then $D^{\prime}\cup \{y\}$ is a dominating set of $J$. Since $x,y$ are mutually incompatible in $J$, we have $\gamma(J^{\prime})+1=|D^{\prime}\cup \{y\}|\geq\gamma(J)+1$, which implies that $\gamma(J^{\prime})=\gamma(J)$. If $x\notin D^{\prime}$, then $N_{J^{\prime}}(x)\cap D^{\prime}\neq\emptyset$. So one of $x$ and $y$, say $x$, can be dominated by $D^{\prime}$ in $J$. Thus $D^{\prime}$ is a dominating set of $J-y$. Since   $y$ is not critical in $J$,  we have $\gamma(J)\leq\gamma(J-y)\leq|D^{\prime}|=\gamma(J^{\prime})$, which also implies that $\gamma(J^{\prime})=\gamma(J)$.

Conversely, if $\gamma(J^{\prime})=\gamma(J)$, we prove firstly that $x,y$ are mutually incompatible in $J$. Otherwise, let $D_{xy}\in\underline{MDS}(J)$ with $\{x,y\}\subseteq D_{xy}$. Then $D_{xy}-\{y\}$ is a dominating set of $J^{\prime}$ with cardinality $\gamma(J)-1=\gamma(J^{\prime})-1<\gamma(J^{\prime})$, a contradiction. We prove secondly that neither of $x$ nor $y$ is critical in $J$. Otherwise, we have that one of $x$ and $y$, say $x$, is critical in $J$.  Let $D^{-}\in\underline{MDS}(J-x)$ and $D_x=D^{-}\cup\{x\}$. Then $D_x\in\underline{MDS}(J)$. Since $x,y$ are mutually incompatible in $J$, we have $y\notin D_x$. So $N_{J}(y)\cap D_x\neq\emptyset$, which implies that $N_{J^{\prime}}(x)\cap D_x\neq\emptyset$. Thus $D_x-\{x\}$ is a dominating set of $J^{\prime}$. Hence $\gamma(J^{\prime})\leq|D_x-\{x\}|=\gamma(J)-1$, also a contradiction.
\end{proof}

\smallskip

\noindent\textbf{Definition \ref{vc}$^\prime$.}
Let $G$ and $H$ be two disjoint graphs with $\emptyset\neq X\subseteq V(G)$, $\emptyset\neq Y\subseteq V(H)$ and $|X|=|Y|$. Let $X=\{x_1,x_2,\ldots,x_m\}$ and $Y=\{y_1,y_2,\ldots,y_m\}$. The \emph{vertex-set coalescence} $G\cdot_{XY}H$  of $G$ and $H$ via $X$ and $Y$, is the graph obtained from the  union of $G$ and $H$ by identifying the vertices $x_i$ and $y_i$ as one vertex $x_i$ for every $1\leq i\leq m$.
(Refer to Figure \ref{GXYH}.)

\vspace{11pt}

\vspace{0mm}
\begin{figure}[h]
\centering
\begin{minipage}[t]{0.88\linewidth}
\centering
\includegraphics[width=0.87\textwidth]{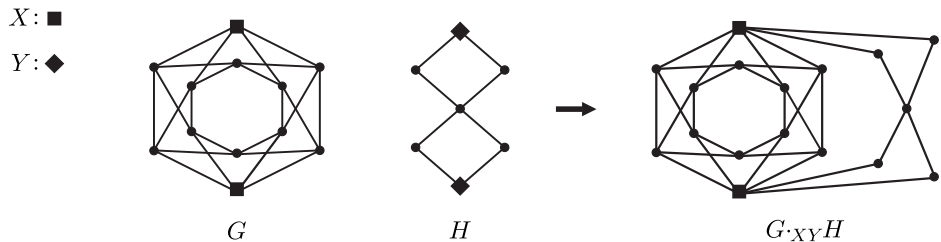}
\vspace{-2mm} \caption{Illustration for the vertex-set coalescence $G\cdot_{XY}H$.}\label{GXYH}
\end{minipage}
\end{figure}

\newpage
\vspace{0pt}
\noindent\textbf{Theorem \ref{ab}$^\prime$.}
\emph{Let $G$ and $H$ be two disjoint graphs with $\emptyset\neq X\subseteq V(G)$, $\emptyset\neq Y\subseteq V(H)$ and $|X|=|Y|$. Then}

(a) $\gamma(G)+\gamma(H)-|X|\leq\gamma(G\cdot_{XY}\negthinspace H)\leq\gamma(G)+\gamma(H)$;

(b) \emph{$X$ and $Y$ are st-critical vertex-sets of $G$ and $H$ respectively if and only if $X$ is an st-critical vertex-set of $G\cdot_{XY}H$;}

(c) \emph{if $X$ is an st-critical vertex-set of $G\cdot_{XY}H$, then $\gamma(G\cdot_{XY}\negthinspace H)=\gamma(G)+\gamma(H)-|X|$.}

\begin{proof}
\textbf{(a)} Firstly, let $D^{\prime}\in \underline{MDS}(G\cdot_{XY}H)$. Then $D^{\prime}\cup Y$ is a dominating set of $G\cup H$, and so $\gamma(G)+\gamma(H)=\gamma(G\cup H)\leq|D^{\prime}\cup Y|=\gamma(G\cdot_{XY}H)+|Y|$, which implies that $\gamma(G)+\gamma(H)-|X|\leq\gamma(G\cdot_{XY}\negthinspace H)$. Secondly, let $H^{\prime}$ be the subgraph of $G\cdot_{XY}\negthinspace H$ induced by $V(H-Y)\cup X$. Then $H^{\prime}\cong H$. Let $D_G\in\underline{MDS}(G)$ and $D_{H^{\prime}}\in\underline{MDS}(H^{\prime})$. Then $D_G\cup D_{H^{\prime}}$ is a dominating set of $G\cdot_{XY}\negmedspace H$, and so $\gamma(G\cdot_{XY}\negmedspace H)\leq|D_G\cup D_{H^{\prime}}|\leq\gamma(G)+\gamma(H)$.

\smallskip
\textbf{(b)} ($\Rightarrow$) Let $D_G^{-}\in \underline{MDS}(G-X)$ and $D_H^{-}\in \underline{MDS}(G-Y)$. Then $D_G^{-} \cup D_H^{-}$ is a dominating set of $G\cdot_{XY}H-X$. So $\gamma(G\cdot_{XY}H-X)\leq|D_G^{-} \cup D_H^{-}|=\gamma(G)-|X|+\gamma(H)-|Y|=\gamma(G)+\gamma(H)-2|X|$. By Item (a), we have $\gamma(G)+\gamma(H)-2|X|\leq\gamma(G\cdot_{XY}H)-|X|$. Thus $X$ is an st-critical vertex-set of $G\cdot_{XY}\negthinspace H$.

($\Leftarrow$)
We are going to prove the sufficiency by induction on $|X|$. When $|X|=1$, we let $X=\{x\}$, $Y=\{y\}$, and $J=G\cup H$. If $\gamma(G\cdot_{xy}\negmedspace H)=\gamma(G)+\gamma(H)$,  then by Lemma \ref{Jxy}, neither $x$ nor $y$ is critical in $J$. So $\gamma(G)+\gamma(H)-1=\gamma(G\cdot_{xy}H)-1=\gamma(G\cdot_{xy}H-x)=\gamma(G-x)+\gamma(H-y)\geq\gamma(G)+\gamma(H)$, a contradiction. Thus we have $\gamma(G\cdot_{xy}H)=\gamma(G)+\gamma(H)-1$ by Item (a). So $\gamma(G)-1+\gamma(H)-1=\gamma(G\cdot_{xy}H)-1=\gamma(G\cdot_{xy}H-x)=\gamma(G-x)+\gamma(H-y)\geq\gamma(G)-1+\gamma(H)-1$, from which we get $\gamma(G-x)=\gamma(G)-1$ and $\gamma(H-y)=\gamma(H)-1$, and so the sufficiency holds.

Suppose that the sufficiency holds when $|X|=n$ ($n\geq1$). We consider the case when $|X|=n+1$ below. Let $x \in X$, $y\in Y$, $X_0=X-\{x\}$, $Y_0=Y-\{y\}$, $J=G\cdot_{X_0Y_0}\negmedspace H$ and $J^{\prime}=G\cdot_{XY}\negmedspace H$. Let $D_1\in \underline{MDS}(G-X)$, $D_2\in \underline{MDS}(H-Y)$ and $D=D_1\cup X \cup D_2$. Since $X$ is an st-critical vertex-set of $J^{\prime}$, it follows that $D\in \underline{MDS}(J^{\prime})$. Also, $D$ is a dominating set of $J-y$. So $\gamma(J-y)\leq|D|=\gamma(J^{\prime})$.  By Definition \ref{is-v} and Lemma \ref{Jxy}, we have
\begin{equation*}
\gamma(J-y)
\begin{cases}
=\gamma(J)-1=\gamma(J^{\prime}), &~\text{if~$y$ is a critical vertex of $J$;}\\
\geq\gamma(J)\geq\gamma(J^{\prime}), &~\text{if~$y$ is not a critical vertex of $J$,}
\end{cases}
\end{equation*}
 from which we know $\gamma(J-y)\geq\gamma(J^{\prime})$. Thus $\gamma(J-y)=\gamma(J^{\prime})$. Therefore $\gamma\big((J-y)-X\big)=\gamma(G-X)+\gamma(H-Y)=\gamma(J^{\prime}-X)=\gamma(J^{\prime})-|X|=\gamma(J-y)-|X|$, which implies that $X$ is, and so $X_0$ is, an st-critical vertex-set of $J-y$. By Observation \ref{S1S2}, we see that $x$ is a   critical vertex of $J^{\prime}-X_0$. Note that $J-y=G\cdot_{X_0Y_0}(H-y)$ and $J^{\prime}-X_0=(G-X_0)\cdot_{xy}(H-Y_0)$.  By the inductive hypothesis, we have $X_0$ is an st-critical vertex-set of $G$ as well as $x$ is critical in $G-X_0$. Hence $\gamma(G-X)=\gamma\big((G-X_0)-x\big)=\gamma(G-X_0)-1=\gamma(G)-|X_0|-1=\gamma(G)-|X|$. That is to say, $X$ is an st-critical vertex-set of $G$. Symmetrically, one can prove that  $Y$ is an st-critical vertex-set of $H$.
Thus the result is true when $|X|=n+1$. Item (b) follows.

\smallskip
\textbf{(c)} Let $D^{\circ}\in\underline{MDS}(G\cdot_{XY}\negthinspace H-X)$. Then by Item (b), we have $\gamma(G\cdot_{XY}\negthinspace H)=|D^{\circ}|+|X|=\gamma(G-X)+\gamma(H-Y)+|X|=\gamma(G)-|X|+\gamma(H)-|Y|+|X|=\gamma(G)+\gamma(H)-|X|$.
\end{proof}

\noindent\textbf{Theorem \ref{cc-c}$^\prime$.}
\emph{Let $G$ and $H$ be two disjoint graphs.} \emph{Let $\emptyset\neq X_i\subseteq V(G)$ for $i=1,2,\ldots,k$ and $\emptyset\neq Y_j\subseteq V(H)$ for $j=1,2,\ldots,l$ with $|X_1|=|Y_1|$. Then
 $\{X_1, X_2,\ldots, X_k\}$ and $\{Y_1, Y_2,\cdots, Y_l\}$ are partitions of st-critical vertex-sets of $G$ and $H$ respectively if and only if $\{X_1, X_2,\ldots, X_k,Y_2,$ $Y_3,\ldots, Y_l\}$ is a partition of st-critical vertex-sets of $G._{X_1Y_1}H$.}
\begin{proof}
Let $\mathbb{X}=\{X_1, X_2,\ldots, X_k\}$, $\mathbb{Y}=\{Y_1, Y_2,\cdots, Y_l\}$ and $\mathbb{X}.\mathbb{Y}=\{X_1, X_2,\ldots, X_k,Y_2, Y_3,$ $\ldots, Y_l\}$.

$(\Rightarrow)$ Clearly, $\mathbb{X}.\mathbb{Y}$ is a partition of $V(G\cdot_{X_1Y_1}\negmedspace H)$. For any $S\in \mathbb{X}.\mathbb{Y}$, we have $S\in \mathbb{X}$ or $S\in\mathbb{Y}$. If $S\in \mathbb{X}$, then by Theorem \ref{ab}$^\prime$ (c), we have
 $\gamma(G\cdot_{X_1Y_1}\negmedspace{H}-S)\leq\gamma(G-S)+\gamma(H-Y_1)=\gamma(G)-|S|+\gamma(H)-|X_1|=\gamma(G\cdot_{X_1Y_1}H)-|S|$. Similarly, we can also prove that $\gamma(G\cdot_{X_1Y_1}H-S)\leq \gamma(G\cdot_{X_1Y_1}H)-|S|$ if $S\in \mathbb{Y}$. The necessity follows.

$(\Leftarrow)$ Clearly,  $\mathbb{X}$ and $\mathbb{Y}$ are partitions of $V(G)$ and $V(H)$, respectively. Firstly, by Theorem \ref{ab}$^\prime$ (b), $X_1$ and $Y_1$ are st-critical vertex-sets of $G$ and $H$, respectively. Secondly, for any $S\in \mathbb{X}-\{X_1\}$, we let $D^{-}\in \underline{MDS}(G\cdot_{X_1Y_1}\negmedspace{H}-S)$, $L=X_1-(X_1\cap D^{-})$ and $L_G$ be the subset of $L$ that can be dominated by $D^{-}\cap V(G)$. Let $H^{\prime}$ be the subgraph of $G\cdot_{X_1Y_1}\negmedspace H$ induced by $V(H-Y_1)\cup X_1$. Then $D^{-}\cap V(G)$ and $D^{-}\cap V(H^\prime)$ are dominating sets of $(G-S)-(L-L_G)$ and $H^\prime-L_G$, respectively. So
\begin{eqnarray*}
|D^{-}| & = & |D^{-}\cap V(G)|+|D^{-}\cap V(H^\prime)|-|D^{-}\cap X_1| \\
& \geq & \gamma\big((G-S)-(L-L_G)\big)+\gamma(H^\prime-L_G)-|X_1\cap D^{-}|  \\
& \geq & \gamma(G-S)-|L-L_G|+\gamma(H^\prime)-|L_G|-|X_1\cap D^{-}|~~~(\text{By~Observation \ref{GS2} (b)})\\
& \geq & \gamma(G)-|S|+\gamma(H)-|X_1| \\
& = & \gamma(G\cdot_{X_1Y_1}\negmedspace H)-|S|~~~(\text{By~Theorem \ref{ab}$^\prime$ (c)})\\
& = & |D^{-}|.
\end{eqnarray*}
By the forth equality, we have $\gamma(G-S)=\gamma(G)-|S|$. Thirdly, for any $S\in \mathbb{Y}-\{Y_1\}$, we can similarly prove that $\gamma(H-S)=\gamma(H)-|S|$. From these three observations,  the sufficiency follows, too.
\end{proof}

\vspace{-4mm}
\section{The existence}

\vspace{-2mm}
In this section, we write $d_G(\cdot)=d(\cdot)$, $N_G(\cdot)=N(\cdot)$ and $N_G[\cdot]=N[\cdot]$, as well as $C_4\cdot C_4=(C_4)^2$, $C_4\cdot C_4\cdot C_4=(C_4)^3$ and so on for belief.

\begin{lemma}\label{cc-f}\emph{\cite{Zhao}}
An st-critical vertex-set of a graph $G$ is a $2$-packing of $G$.
\end{lemma}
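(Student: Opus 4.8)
The plan is to argue by contradiction, showing that a violation of the $2$-packing condition forces a dominating set of $G$ strictly smaller than $\gamma(G)$. So I would suppose $S$ is an st-critical vertex-set of $G$, that is $\gamma(G-S)=\gamma(G)-|S|$, but that $S$ is \emph{not} a $2$-packing; then there exist distinct $x,y\in S$ with $d_G(x,y)\le 2$, and from such a pair I will extract a contradiction.

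First I would fix $D^{-}\in\underline{MDS}(G-S)$, so that $|D^{-}|=\gamma(G-S)=\gamma(G)-|S|$, and set $D=D^{-}\cup S$. Since $D^{-}\subseteq V(G)-S$ is disjoint from $S$, the set $D$ dominates $G$ (every vertex of $G-S$ is dominated by $D^{-}$, and every vertex of $S$ lies in $D$) and $|D|=|D^{-}|+|S|=\gamma(G)$; hence $D$ is in fact a minimum dominating set of $G$. The whole point is that now $S\subseteq D$, and I will exploit the short distance between $x$ and $y$ to trim $D$ without losing domination.

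Then I would split into the two cases allowed by $d_G(x,y)\le 2$ (note $x\neq y$ gives $d_G(x,y)\ge 1$). If $d_G(x,y)=1$, then $x$ and $y$ are adjacent, so $D-\{y\}$ still dominates $G$ (the vertex $y$ is dominated by $x\in S\subseteq D$), producing a dominating set of size $\gamma(G)-1$. If $d_G(x,y)=2$, I would pick a common neighbour $z$ of $x$ and $y$ and consider $\big(D-\{x,y\}\big)\cup\{z\}$: every vertex of $G-S$ is still dominated by $D^{-}$, every vertex of $S-\{x,y\}$ lies in the set, and both $x$ and $y$ are dominated by $z$, so this too is a dominating set of size at most $\gamma(G)-1$. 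Either way $\gamma(G)\le\gamma(G)-1$, the desired contradiction, so no such pair exists and $S$ is a $2$-packing.

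The argument is short and I do not anticipate a genuine obstacle; the only steps requiring care are the bookkeeping. I would double-check that $D=D^{-}\cup S$ really is minimum, since this is precisely where the st-critical \emph{equality} $\gamma(G-S)=\gamma(G)-|S|$ is used rather than merely the inequality of Observation \ref{GS2}(b), and, in the distance-two case, that the count $|(D-\{x,y\})\cup\{z\}|\le\gamma(G)-1$ holds whether or not $z$ already belongs to $D$ (if it does, the resulting set is only smaller, which still suffices).
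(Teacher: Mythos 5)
Your proof is correct. The paper states Lemma \ref{cc-f} without proof, importing it from \cite{Zhao}, so there is no in-paper argument to compare against; your argument is the natural, self-contained one: the st-critical equality makes $D=D^{-}\cup S$ a dominating set of size exactly $\gamma(G)$ containing all of $S$, and a pair $x,y\in S$ with $d_G(x,y)\le 2$ then lets you trim $D$ to a dominating set of size at most $\gamma(G)-1$, which is absurd. The two delicate points you flagged are handled properly --- the minimality of $D$ genuinely uses the equality $\gamma(G-S)=\gamma(G)-|S|$ rather than just Observation \ref{GS2}(b), and the distance-two count survives the case $z\in D$ --- and the key containment $D^{-}\subseteq D-\{x,y\}$ (valid since $D^{-}\cap S=\emptyset$) guarantees that $V(G)-S$ stays dominated after the swap.
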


\begin{lemma}\label{Th334}\emph{\cite{Wang3}}
 If $d(u)=1$ and $v\in N(u)$, then $v$ is not a  critical vertex of $G$. \emph{(This implies that a vertex-critical graph has no vertices of degree one.)}
\end{lemma}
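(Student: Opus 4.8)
The plan is to reformulate non-criticality as an inequality and then exploit the fact that deleting $v$ isolates $u$. Since $u$ has degree one, its unique neighbor is $v$, so $N(u)=\{v\}$. By Definition \ref{is-v} together with the baseline bound $\gamma(G-v)\geq\gamma(G)-1$ supplied by Observation \ref{GS2}(b), the vertex $v$ fails to be critical precisely when $\gamma(G-v)\geq\gamma(G)$. Thus it suffices to produce a dominating set of $G$ whose cardinality is at most $\gamma(G-v)$.

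To this end I would start from a minimum dominating set $D^{\prime}\in\underline{MDS}(G-v)$. The key observation is that once $v$ is removed, the vertex $u$ becomes isolated in $G-v$, its only neighbor having been deleted; an isolated vertex must lie in every dominating set, and hence $u\in D^{\prime}$. This single structural fact is the engine of the whole argument.

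I then claim that the very same set $D^{\prime}$ already dominates $G$. Indeed, every vertex of $V(G)-\{v\}$ is dominated by $D^{\prime}$ in $G-v$, and those domination relations persist verbatim in $G$; the only additional vertex to account for is $v$ itself, which is adjacent to $u\in D^{\prime}$ and is therefore dominated. Hence $D^{\prime}$ is a dominating set of $G$, giving $\gamma(G)\leq|D^{\prime}|=\gamma(G-v)$, which is exactly the inequality needed to conclude that $v$ is not a critical vertex.

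Finally, the parenthetical consequence is immediate: if $G$ were vertex-critical yet contained a vertex $u$ of degree one, then the neighbor $v$ of $u$ would be non-critical by the statement just proved, contradicting Definition \ref{is-cG}; hence $G$ has no vertices of degree one. I do not anticipate any genuine obstacle here, as the entire proof rests on forcing $u$ into the dominating set of $G-v$ and reusing that set in $G$, with the rest being bookkeeping against the baseline inequality.
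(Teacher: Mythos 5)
Your proof is correct: forcing the isolated vertex $u$ into any $D^{\prime}\in\underline{MDS}(G-v)$ and observing that $D^{\prime}$ then dominates all of $G$ (including $v$, via $u$) gives $\gamma(G)\leq\gamma(G-v)$, which together with Observation \ref{GS2}(b) shows $v$ is not critical. The paper itself states this lemma without proof, citing \cite{Wang3}, and your argument is exactly the standard one used there, so there is nothing to add.
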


\begin{lemma}\label{shd}\emph{\cite{Zhao}}
Let $S$ be an st-critical vertex-set of $G$. If $D^{-}_{G}\in \underline{MDS}(G-S)$, then $|D^{-}_{G}|=\gamma(G)-|S|$ and $D^{-}_{G}\cap N(S)=\emptyset$.
\end{lemma}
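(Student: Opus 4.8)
The plan is to dispatch the two conclusions separately, the first being essentially a matter of unwinding definitions and the second requiring a short counting argument. For the first conclusion, since $D^{-}_{G}\in\underline{MDS}(G-S)$ we have $|D^{-}_{G}|=\gamma(G-S)$ by definition of a minimum dominating set, and since $S$ is an st-critical vertex-set of $G$ we have $\gamma(G-S)=\gamma(G)-|S|$ by Definition \ref{is-v}$^\prime$; combining these gives $|D^{-}_{G}|=\gamma(G)-|S|$ at once.

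For the second conclusion $D^{-}_{G}\cap N(S)=\emptyset$, the key idea is to measure how much of $S$ is already dominated by $D^{-}_{G}$ inside $G$. First I would set $S'=\{s\in S : N(s)\cap D^{-}_{G}\neq\emptyset\}$, so that $S'$ is precisely the set of vertices of $S$ having a neighbour in $D^{-}_{G}$; note that $S'=\emptyset$ is logically equivalent to $D^{-}_{G}\cap N(S)=\emptyset$, which is exactly what must be shown. I would then verify that $D^{-}_{G}\cup(S-S')$ is a dominating set of the whole graph $G$: every vertex of $V(G)-S=V(G-S)$ is dominated by $D^{-}_{G}$ because $D^{-}_{G}$ dominates $G-S$; every vertex of $S'$ is dominated by $D^{-}_{G}$ by the very definition of $S'$; and every vertex of $S-S'$ lies in the set itself.

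Next I would count cardinalities. Because $D^{-}_{G}\subseteq V(G-S)=V(G)-S$, the set $D^{-}_{G}$ is disjoint from $S-S'$, so, using the first conclusion for $|D^{-}_{G}|$,
\[
|D^{-}_{G}\cup(S-S')|=|D^{-}_{G}|+|S-S'|=\bigl(\gamma(G)-|S|\bigr)+\bigl(|S|-|S'|\bigr)=\gamma(G)-|S'|.
\]
Since this union is a dominating set of $G$, its cardinality is at least $\gamma(G)$ by definition of the domination number, whence $\gamma(G)-|S'|\geq\gamma(G)$ and therefore $|S'|\leq 0$, forcing $S'=\emptyset$. This is exactly $D^{-}_{G}\cap N(S)=\emptyset$.

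As for difficulty, I expect no serious obstacle here: the whole argument rests on the single observation that $D^{-}_{G}$, a dominating set of $G-S$, can be completed to a dominating set of $G$ merely by adjoining the \emph{undominated} portion $S-S'$ of $S$, and that counting this completion pins down $|S'|$. The only point that warrants care is confirming that the union genuinely dominates all of $G$ (the three cases above) and that the cardinalities add without overlap, which is guaranteed by $D^{-}_{G}\subseteq V(G)-S$. Notably, neither the $2$-packing property (Lemma \ref{cc-f}) nor Lemma \ref{Th334} is needed for this particular statement.
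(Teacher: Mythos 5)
Your proof is correct and is essentially the paper's argument: your $S'$ is exactly the paper's $L=N(D^{-}_{G})\cap S$, and both proofs turn on the same observation that $D^{-}_{G}$ dominates everything outside $S-S'$ for free, with the first conclusion handled identically by unwinding the definition. The only divergence is the finishing move: the paper views $D^{-}_{G}$ as a too-small dominating set of $G-(S-L)$ and contradicts the st-criticality of the subset $S-L$ (Observation \ref{cc-s}, imported from \cite{Zhao}), whereas you adjoin $S-S'$ to obtain a dominating set of $G$ of cardinality $\gamma(G)-|S'|$ and contradict minimality of $\gamma(G)$ directly --- an equivalent count, with the mild advantage of being self-contained, needing neither Observation \ref{cc-s} nor, as you note, the $2$-packing property of Lemma \ref{cc-f}.
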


\begin{proof}
Firstly, from the definition of st-critical vertex-set, we have $|D^{-}_{G}|=\gamma(G)-|S|$. Secondly, if $D^{-}_{G}\cap N(S)\neq\emptyset$, let $L=N(D^{-}_{G})\cap  S$. Then $D^{-}_{G}$ is a dominating set of $G-(S-L)$, and so $\gamma(G-(S-L))\leq |D^{-}_{G}|=\gamma(G)-|S|<\gamma(G)-|S-L|$. However, we have $S-L$ is an st-critical vertex-set of $G$ by Observation \ref{cc-s}, which implies that $\gamma(G-(S-L))=\gamma(G)-|S-L|$, a contradiction.
\end{proof}

\begin{lemma}\label{y-t}
Let  $S$ be an st-critical vertex-set of $G$ and $w\in V(G-S)$.

\emph{(a)}  If $z\in N(w)\cap S$, then there exists $v_0\in N(w)-\{z\}$ such that $N(v_0)\cap S=\emptyset$.

\emph{(b)} Let $uvwz$ be a path or a cycle in $G$  $($i.e. $u=z$ is possible$)$. If $u,z\in S$, then $d(w)>2$.

\emph{(c)}  Let $X=N(w)$. If $2\leq|X|\leq3$ and $N(x)\cap S\neq\emptyset$ for every $x\in X$, then $|N(X)\cap S|=1$.

\emph{(d)}  Let $uvwyz$ be a trail in $G$. If $u,z\in S$ and $d(w)=2$, then $u=z$.
\end{lemma}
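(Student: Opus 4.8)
The common engine for all four parts is Lemma \ref{shd}: if $D^{-}\in\underline{MDS}(G-S)$ then $|D^{-}|=\gamma(G)-|S|$ and $D^{-}\cap N(S)=\emptyset$, used together with the $2$-packing property of $S$ (Lemma \ref{cc-f}). The plan is to fix such a $D^{-}$ and analyse how $w$ sits inside it. I would first record two facts that recur throughout. (i) If every neighbour of $w$ lies in $N(S)$, then $N(w)\cap D^{-}=\emptyset$ by Lemma \ref{shd}, so the vertex $w$, which lies in $G-S$ and must be dominated by $D^{-}$, is forced into $D^{-}$; hence $w\in D^{-}$ for \emph{every} minimum dominating set of $G-S$. (ii) By the $2$-packing property no vertex can have two neighbours in $S$, so every vertex of $N(S)$ has a \emph{unique} neighbour in $S$. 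It is also convenient to prove the parts in the order (a), (b), (d), (c), since (b) uses (a) and (c) uses both (a) and (d).

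For (a), since $z\in N(w)\cap S$ we have $w\in N(S)$, so $w\notin D^{-}$ by Lemma \ref{shd}; as $w$ is dominated by $D^{-}$ it has a neighbour $v_0\in D^{-}$, and again by Lemma \ref{shd} $v_0\notin N(S)$, i.e. $N(v_0)\cap S=\emptyset$; finally $v_0\in V(G-S)$ forces $v_0\neq z$. For (b), apply (a) to $z\in N(w)\cap S$ to get $v_0\in N(w)$ with $N(v_0)\cap S=\emptyset$; the middle vertex $v$ of the path has $u\in N(v)\cap S\neq\emptyset$, so $v\neq v_0$, and $v\notin S$ (otherwise $u,v$ would be adjacent vertices of $S$), whence $v\neq z$. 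Thus $z,v,v_0$ are three distinct neighbours of $w$ and $d(w)\geq 3$.

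Parts (d) and (c) rest on a single replacement trick. Suppose $T\subseteq S$ is st-critical, so by Observation \ref{cc-s} and Lemma \ref{shd} we have $\gamma(G-T)=\gamma(G)-|T|=|D^{-}|+|S\setminus T|$. If a set $A\subseteq V(G-T)$ dominates $N[w]$ together with $S\setminus T$, then $(D^{-}\setminus\{w\})\cup A$ dominates $G-T$, because deleting $w$ from $D^{-}$ can only undominate vertices of $N[w]$; hence $\gamma(G-T)\leq|D^{-}|-1+|A|$, a contradiction as soon as $|A|\leq|S\setminus T|$. For (d), since $d(w)=2$ the two neighbours $v,y$ of $w$ satisfy $v\sim u$ and $y\sim z$ with $u,z\in S$, so $N(w)=\{v,y\}\subseteq N(S)$ and (by the $2$-packing property) $v,y\notin S$; in particular $w\in D^{-}$. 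Assuming $u\neq z$, take $T=S\setminus\{u,z\}$ and $A=\{v,y\}$: then $A$ dominates $u$, $z$ and all of $N[w]=\{w,v,y\}$, and $|A|=2=|S\setminus T|$, giving $\gamma(G-T)\leq|D^{-}|+1<|D^{-}|+2$, a contradiction. Hence $u=z$.

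For (c), I would first use (a) to get $N(w)\cap S=\emptyset$ (otherwise some neighbour of $w$ would have no $S$-neighbour, contrary to the hypothesis), so by (ii) each $x\in X$ has a unique neighbour $s_x\in S$ and $N(X)\cap S\neq\emptyset$. If two distinct $s,s'\in N(X)\cap S$ existed, say with $x\sim s$ and $x'\sim s'$, then $x\neq x'$ and $s\!-\!x\!-\!w\!-\!x'\!-\!s'$ is a trail, so (d) forces $d(w)\neq2$ and hence $|X|=d(w)=3$. It then remains to exclude $|N(X)\cap S|\in\{2,3\}$ by the replacement trick with $T=S\setminus(N(X)\cap S)$. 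If $|N(X)\cap S|=3$ take $A=X$; if $|N(X)\cap S|=2$, say $x_1,x_2\sim s_1$ and $x_3\sim s_2$, take $A=\{s_1,x_3\}$, the point being that the shared neighbour $s_1$ dominates both $x_1$ and $x_2$ at once. In each case $|A|=|N(X)\cap S|=|S\setminus T|$ and $A$ dominates $N[w]$ and $N(X)\cap S$, so the trick yields a contradiction, leaving $|N(X)\cap S|=1$. I expect the multiplicity-$(2,1)$ subcase here to be the main obstacle: the naive choice of one neighbour of $w$ per element of $N(X)\cap S$ fails to re-dominate the leftover neighbour of $w$, and the fix of spending a vertex of $S$ (rather than of $X$) to cover two neighbours of $w$ simultaneously is exactly what makes the counting work.
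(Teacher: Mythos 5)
Your proposal is correct; I checked all four parts, including the delicate $(2,1)$-multiplicity subcase of (c), and found no gaps. For (a) and (b) you follow essentially the paper's route: Lemma \ref{shd} gives $D^{-}\cap N(S)=\emptyset$, so $w$'s dominator in $G-S$ is a neighbour avoiding $N(S)$ (the paper phrases (a) as a contradiction, you argue directly --- the same computation). For (c) and (d), however, your architecture is genuinely different, indeed reversed: the paper proves (c) first, by a single uniform swap inside $G$ itself --- it picks $D^{-}\in\underline{MDS}(G-S)$ with $D^{-}\cap X=\emptyset$, forces $w\in D^{-}$, and observes via a $2$-packing pigeonhole argument that some $r\in N(X)\cap S$ has a unique neighbour $x^{\prime}\in X$, whence $(D^{-}-\{w\})\cup(S-\{r\})\cup\{x^{\prime}\}$ dominates $G$ with only $\gamma(G)-1$ vertices --- and then gets (d) as an immediate corollary of (c); you instead prove (d) first via your replacement trick with $T=S\setminus\{u,z\}$, use (d) to force $d(w)=3$ in (c), and finish (c) by the same trick with $T=S\setminus(N(X)\cap S)$, split into the $(1,1,1)$ and $(2,1)$ multiplicity subcases. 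In fact your trick subsumes the paper's swap: taking $T=\emptyset$ and $A=(S-\{r\})\cup\{x^{\prime}\}$ (which dominates $N[w]$ and all of $S$, with $|A|=|S|=|S\setminus T|$), your inequality $\gamma(G-T)\leq|D^{-}|-1+|A|$ is exactly the paper's contradiction. The paper's choice of the vertex $r$ with a unique $X$-neighbour is what lets it handle $|N(X)\cap S|\in\{2,3\}$ uniformly and without ever needing $|X|=3$, at the price of the pigeonhole observation; your version buys modularity ((d) no longer depends on (c)) and a reusable parametrized counting scheme, but it additionally leans on Observation \ref{cc-s} to evaluate $\gamma(G-T)$ exactly, a step the paper's proof of this lemma never needs. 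One small presentational point: in (c) you should state explicitly that $w\in D^{-}$ (your fact (i) applies since every $x\in X$ lies in $N(S)$), because the $-1$ in the replacement inequality silently requires both $w\in D^{-}$ and $w\notin A$.
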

\begin{proof}
\textbf{(a)} Suppose to the contrary that $N(v)\cap S\neq\emptyset$ for every $v\in N(w)-\{z\}$. Then $N[w]-\{z\}\subseteq N(S)$. By Lemma \ref{shd},  there exists $D_G^{-}\in \underline{MDS}(G-S)$ such that  $ D_G^{-}\cap(N[w]-\{z\})=\emptyset$. But now, we see that $D_G^{-}$ can not dominate $w$ in $G-S$, a contradiction.


\textbf{(b)} It is an immediate result of Item (a).

\textbf{(c)}  Suppose to the contrary that $|N(X)\cap S|\neq1$. By Lemma \ref{cc-f}, we have $|N(X)\cap S|\leq|X|$. This implies $|N(X)\cap S|=2$ or $3$. Let $\{r,s\}\subseteq N(X)\cap S$. Then $N(r)\cap N(s)=\emptyset$. So we must have that at least one of $r$ and $s$, say $r$, is adjacent to only one element of $X$. Thus we may suppose that $\{r\}=N(x^{\prime})\cap S$, where $x^{\prime}\in X$. Note that $N(x)\cap S\neq\emptyset$ for every $x\in X$ implies $X\subseteq N(S)$. By Lemma \ref{shd}, there exists $D_G^{-}\in \underline{MDS}(G-S)$ such that $D_G^{-}\cap X=\emptyset$ and $|D_G^{-}|+|S|=\gamma(G)$. In order to dominate $w$ in $G-S$, we have $w\in D_G^{-}$. But then $(D_G^{-}-\{w\})\cup (S-\{r\})\cup\{x^{\prime}\}$ is a dominating set of $G$ with cardinality $\gamma(G)-1$, a contradiction.

\textbf{(d)} It is an immediate result of Item (c).
\end{proof}

\begin{thm}\label{main}
There exists a connected graph $G$ such that $V(G)$ can be partitioned into $l$  strong critical vertex-sets if and only if $l\notin\{2,3,5\}$.
\end{thm}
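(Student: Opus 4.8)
The statement has two directions. The harder, more structural part is the necessity (``only if''): showing that no connected strong $l$-vertex-sets-critical graph exists when $l\in\{2,3,5\}$. The sufficiency (``if'') is a construction problem, and I expect it to be the more routine of the two once the right building blocks are identified. I would organize the proof around the two directions, treating small cases by hand and producing an infinite family by coalescence.

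\textbf{Sufficiency.} The plan is to exhibit, for every $l\notin\{2,3,5\}$, an explicit connected graph in $\mathscr{C}_l$. The natural seed is the $4$-cycle $C_4$: if $C_4$ has vertices $a_1,b_1,a_2,b_2$ in cyclic order, then $\{a_1,a_2\}$ and $\{b_1,b_2\}$ each form an st-critical vertex-set (each is a $2$-packing realizing $\gamma(C_4)=2$), so $C_4\in\mathscr{C}_2$ in the non-connected-partition sense; more usefully, single vertices of suitable graphs are critical. The key engine is Theorem~\ref{cc-c}$^\prime$: coalescing graphs along st-critical vertex-sets adds up the numbers of parts in a controlled way. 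Concretely, if $G\in\mathscr{C}_k$ and $H\in\mathscr{C}_m$ and we coalesce along one part from each (so $|X_1|=|Y_1|$), the result lies in $\mathscr{C}_{k+m-1}$. Thus, starting from a few small ``atoms'' (for instance $(C_4)^j$, the iterated coalescence of $C_4$'s, which realizes various small values of $l$), one builds an additive semigroup of attainable $l$-values. I would first verify directly that $l=1,4,6,7$ are attainable by explicit small graphs (e.g. $K_1$ gives $l=1$; $(C_4)^2$ and its relatives give the next few values), and then show these generate every $l\ge 6$ together with $4$ via repeated coalescence, using Theorem~\ref{cc-c}$^\prime$ to guarantee the coalesced graph is again in the right class and is connected.

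\textbf{Necessity.} This is where the real obstacle lies, and it splits into the three forbidden values. For each of $l\in\{2,3,5\}$ I would argue by contradiction: assume $G$ is connected with a strong critical vertex-sets partition $\{S_1,\dots,S_l\}$ and derive structural constraints until connectivity is violated or $\gamma$ is forced to drop below its value. The main tools are Lemma~\ref{cc-f} (each $S_i$ is a $2$-packing, hence its vertices are pairwise at distance $>2$), Lemma~\ref{shd} (a minimum dominating set of $G-S_i$ avoids $N(S_i)$ and has size $\gamma(G)-|S_i|$), and the fine local lemma~\ref{y-t}, whose parts (a)--(d) constrain degrees and neighborhoods of vertices adjacent to an st-critical set. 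The heart of the argument is a counting/parity obstruction: summing $|S_i|$ gives $|V(G)|$, while $\gamma(G)-|S_i|=\gamma(G-S_i)\ge 0$ forces each $|S_i|\le\gamma(G)$, and the interplay of these with the $2$-packing distance condition and the degree restrictions from Lemma~\ref{y-t}(b),(d) should make the cases $l=2,3,5$ incompatible with a connected graph. I expect $l=2$ to fall quickly (two $2$-packings partitioning a connected graph force a $C_4$-like rigidity that already fails the critical condition), while $l=5$ will be the delicate case requiring the full strength of Lemma~\ref{y-t}.

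\textbf{Main obstacle.} The principal difficulty is the $l=5$ nonexistence: ruling out a single sporadic value demands a tight local analysis rather than a clean divisibility argument, so I anticipate spending most of the effort showing that the degree and neighborhood conditions of Lemma~\ref{y-t} cannot be simultaneously satisfied across five mutually distance-spread st-critical classes in a connected graph. Once that case is closed, assembling the sufficiency family by coalescence and checking connectivity should be comparatively mechanical.
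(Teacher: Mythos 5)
Your overall architecture matches the paper's: sufficiency by coalescence via Theorem~\ref{cc-c}$^\prime$ starting from small atoms, necessity by contradiction using Lemmas~\ref{cc-f}, \ref{Th334}, \ref{shd} and \ref{y-t}, with $l=5$ as the hard case. But there are two genuine defects. First, your opening claim about $C_4$ is false: if $a_1,b_1,a_2,b_2$ are in cyclic order, then $d(a_1,a_2)=2$, so $\{a_1,a_2\}$ is \emph{not} a $2$-packing, and $\gamma(C_4-\{a_1,a_2\})=2\neq\gamma(C_4)-2=0$ (the two remaining vertices are isolated), so $\{a_1,a_2\}$ is not an st-critical vertex-set. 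Had it been one, $C_4\in\mathscr{C}_2$ would be a counterexample to the very theorem you are proving (the necessity direction rules out $l=2$). In fact every st-critical vertex-set of $C_4$ is a singleton, and $C_4\in\mathscr{C}_4$.

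Second, and more seriously, your sufficiency plan has a gap at $l\equiv 2\pmod 3$. Coalescing a graph in $\mathscr{C}_k$ with one in $\mathscr{C}_m$ along matching parts yields a graph in $\mathscr{C}_{k+m-1}$, so from your proposed atoms $l\in\{1,4,6,7\}$ the attainable values are $\{4+3a+5b+6c\}\cup\{6+3a+5b+6c\}\cup\{7+3a+5b+6c\}$ with $a,b,c\geq 0$; a short check shows $l=8$ is \emph{not} in this set (you would need $3a+5b+6c$ to equal $4$, $2$, or $1$). No iterated coalescence of $C_4$'s can reach $8$, which is why the paper introduces a genuinely new building block, the Harary graph $H_{4,8}$, whose eight vertices are all critical, giving $\mathscr{C}_8\neq\emptyset$ and then $3k+2$ for all $k\geq 2$ via $H_{4,8}\cdot(C_4)^{k-2}$. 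Your plan as written cannot produce this atom. On the necessity side, your sketch points at the right tools, but the proposed ``counting/parity obstruction'' ($|S_i|\leq\gamma(G)$, etc.) is not the operative mechanism: the paper's argument bounds degrees by $2\leq d(g)\leq l-1$ (from Lemmas~\ref{Th334} and \ref{cc-f}) and then, for $l=5$, excludes $d(g)=3$ and $d(g)=4$ through a substantial case analysis (Claims 1--3, using all four parts of Lemma~\ref{y-t} plus Theorems~\ref{cc-c} and \ref{cc-c}$^\prime$ applied to cut-vertices), concluding $G$ is a cycle and deriving a contradiction from $\gamma(C_n)=\gamma(P_n)=\lceil n/3\rceil$. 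That degree-exclusion work is the heart of the proof and is entirely absent from your outline.
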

\begin{proof}
$(\Leftarrow)$ Let $k\in \mathbb{Z}^+$ and $H_{4,8}$ be the (Harary) graph as shown in Figure \ref{gabh}.~Based on the fact that $\mathbb{Z}^+-\{2,3,5\}=\{1\}\cup\{3k\mid k\geq2\}\cup\{3k+1\mid k\geq1\}\cup\{3k+2\mid k\geq2\}$, we let
\begin{equation*}
G=
\begin{cases}
K_1, & \mbox{if~$l=1$};\\
(C_4)^{k}, & \mbox{if~$l\in \{3k\mid k\geq2\}\cup\{3k+1\mid k\geq1\}$},\\
H_{4,8} \cdot (C_4)^{k-2}, & \mbox{if~$l\in \{3k+2\mid k\geq2\}$}.
\end{cases}
\end{equation*}
Noting that   $V(C_4)$ and $V(H_{4,8})$ can be partitioned into 4 and 8 st-critical vertex-sets respectively, we can recursively deduce that $V\big((C_4)^{k}\big)$ and $V\big(H_{4,8} \cdot (C_4)^{k-2}\big)$ can be partitioned into $3k+1$ and $3k+2$ ($k\geq2$) st-critical vertex-sets respectively  by Theorem \ref{cc-c}$^{\prime}$. Also, note that $V\big((C_4)^{2}\big)$

\noindent can be partitioned into 6 st-critical vertex-sets. So $V\big((C_4)^{k}\big)$ can be partitioned into $3k$ ($k\geq2$) st-critical vertex-sets. The sufficiency follows.


\smallskip
$(\Rightarrow)$ Suppose to the contrary that $l\in\{2,3,5\}$. If $l=2$, then by Lemma \ref{cc-f}, we get that $d(h)=1$ for every $h\in V(G)$, which implies that $G\cong K_2$, contradicting the fact that $K_2$ is not a vertex-critical graph. If $l=3$, then by Lemmas \ref{Th334} and \ref{cc-f}, we deduce that $d(h)=2$ for every $h\in V(G)$, which implies that $G$ is a cycle. However, one can check that this is impossible. (According to the two well-known facts that $\gamma(C_n)=\lceil\frac{n}{3}\rceil$ and $\gamma(P_n)=\lceil\frac{n}{3}\rceil$, we deduce that a cycle of order at least 4 can not own an st-critical vertex-set of cardinality $2$.)

If $l=5$, then we let $V(G)=S_1\cup S_2\cup S_3\cup S_4\cup S_5$ be a partition of st-critical vertex-sets of $G$. By Lemmas \ref{Th334} and  \ref{cc-f}, we get that $2\leq d(g)\leq 4$ for every $g\in V(G)$.

\smallskip
\emph{\textbf{Claim 1.}}~ Let $\{j,k,l,m,n\}=\{1,2,3,4,5\}$. If $N(s_n)=\{s_j,s_k,s_l\}$, where $s_i\in S_i$ for every $i\in\{j,k,l,n\}$, then $|N(\{s_j,s_k,s_l\})\cap S_m|=1$.

For convenience, suppose without loss of generality that $(j,k,l,m,n)=(1,2,3,4,5)$.  We use reduction to absurdity. Assume that $|N(\{s_1,s_2,s_3\})\cap S_4|\neq1$.
Then since

\vspace{-12pt}
\begin{equation*}
N(s_5)=\{s_1,s_2,s_3\},
\end{equation*}

 \noindent by the contrapositive of Lemma \ref{y-t} (c), at least one of $s_1$, $s_2$ and $s_3$, say $s_1$, satisfies $N(s_1)\cap S_4=\emptyset$. Thus $N(s_1)-\{s_5\}\subseteq S_2\cup S_3$. To combine this with Lemma \ref{y-t} (b), we must have $d(s_1)\neq 2$, which implies that $d(s_1)=3$. So $N(s_1)\cap S_2\neq\emptyset$ and $N(s_1)\cap S_3\neq\emptyset$.

Since $s_1\in N(s_5)\cap S_1$, by Lemma \ref{y-t} (a), one of $N(s_2)\cap S_1$ and $N(s_3)\cap S_1$, say $N(s_2)\cap S_1$, is empty set. By Lemma \ref{cc-f}, we have $(N(s_2)-\{s_5\})\cap (S_2\cup S_5)=\emptyset$.  Since $s_3\in N(s_5)$ and $N(s_1)\cap S_3\neq\emptyset$, by Lemma \ref{y-t} (a), we obtain that $N(s_2)\cap S_3=\emptyset$. So by Lemma \ref{Th334}, we get $N(s_2)\cap S_4\neq\emptyset$. Let $N(s_2)\cap S_4=\{s_4\}$. Then

\vspace{-13.5pt}
\begin{equation*}
N(s_2)=\{s_4,s_5\}.
\end{equation*}

\vspace{-1pt}
\noindent So we have $N(s_4)\cap  S_5=\emptyset$ by the contrapositive of Lemma \ref{y-t} (b). Thus $N(s_4)-\{s_2\}\subseteq S_1\cup S_3$. Since $d(s_2)=2$, we have $s_4s_3\in E(G)$ or $s_4s_1\in E(G)$ by Lemma \ref{y-t} (d). But we have supposed that $N(s_1)\cap S_4=\emptyset$ in the third sentence of the first paragraph. Thus, only $s_4s_3\in E(G)$ holds, which implies that

\vspace{-25pt}
\begin{equation*}
N(s_4)=\{s_2,s_3\}.
\end{equation*}

\vspace{-4pt}
\noindent  So by Lemma \ref{y-t} (b), we get

 \vspace{-22pt}
\begin{equation*}
N(s_3)\cap S_2=\emptyset.
\end{equation*}

\vspace{3pt}
  Now, if $N(s_3)\cap S_1=\emptyset$ (refer to Figure \ref{123} (i-a)) or $N(s_3)\cap S_1=\{s_1\}$ (refer to Figure \ref{123} (i-b)), then $s_1$ is a cut-vertex of $G$. Thus by Theorem \ref{cc-c}, $G[\{s_1,s_2,s_3,s_4,s_5\}]$ is vertex-critical. But one can check that it is not true.  So we can let $N(s_3)\cap S_1=\{r_1\}$ with $r_1\neq s_1$. However, by Lemma \ref{shd}, there exists $D_G^{-}\in \underline{MDS}(G-\{r_1,s_1\})$ such that $D_G^{-}\cap \{s_3,s_5\}=\emptyset$ and $|D_G^{-}|+2=\gamma(G)$. In order to dominate $\{s_2,s_4\}$ in $G-\{r_1,s_1\}$, we have $|\{s_2,s_4\}\cap D_G^{-}|=1$. But then $(D_G^{-}-\{s_2,s_4\})\cup \{s_3,s_5\}$ is a dominating set of $G$ with cardinality $\gamma(G)-1$, a contradiction.


\newpage

\begin{figure}[h]
\centering
\begin{minipage}[t]{0.95\linewidth}
\centering
\includegraphics[width=0.95\textwidth]{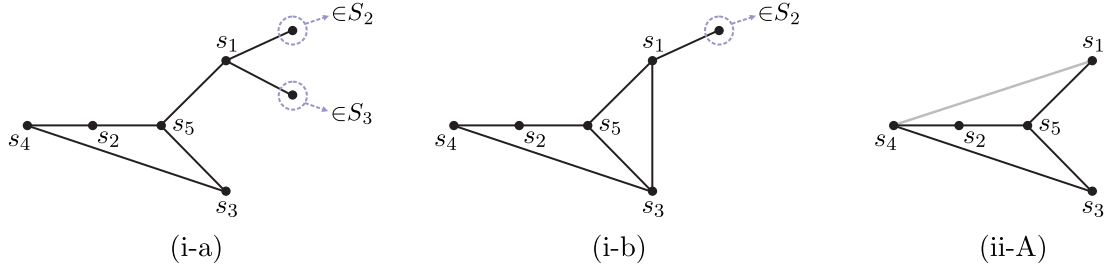}
\vspace{-2mm} \caption{Illustration for the proofs of Claim 1 and Claim 2-A}\label{123}
\end{minipage}
\end{figure}

\emph{\textbf{Claim 2.}}~ $d(g)\neq3$ for every $g\in V(G)$.

Without loss of generality, suppose to the contrary that there exists $s_5\in S_5$ such that $N(s_5)=\{s_1,s_2,s_3\}$, where $s_i\in S_i$, $i=1,2,3$. By Claim 1, we can let

\vspace{-5mm}
\begin{equation}
N(\{s_1,s_2,s_3\})\cap S_4=\{s_4\}.
\label{123-4}
\end{equation}

\vspace{1mm}
\textbf{Case A.} At least two of $s_1$, $s_2$ and $s_3$, say $s_2$ and $s_3$, have degree $2$ in $G$.

Then by  Lemma \ref{y-t} (b), we have $N(s_2)\cap S_1=\emptyset$ and $N(s_3)\cap S_1=\emptyset$, as well as  $N(s_2)\cap S_3=\emptyset$ and $N(s_3)\cap S_2=\emptyset$. So we must have $N(s_2)\cap S_4\neq\emptyset$ and $N(s_3)\cap S_4\neq\emptyset$ by Lemma \ref{Th334}. By (\ref{123-4}), we get $N(s_2)\cap S_4=\{s_4\}=N(s_3)\cap S_4$. Again by Lemma \ref{y-t} (b), we have $N(s_4)\cap S_5=\emptyset$.

If $N(s_4)\cap S_1\neq \emptyset$, then by Lemma \ref{y-t} (d), we have $N(s_4)\cap S_1=\{s_1\}$. From this, we see that either $G=G[\{s_1,s_2,s_3,s_4,s_5\}]$, or $s_1$ is a cut-vertex of $G$ (no matter $N(s_4)\cap S_1= \emptyset$ or not). Altogether, we have $G[\{s_1,s_2,s_3,s_4, s_5\}]$ is vertex-critical by Theorem \ref{cc-c}. But clearly this is not true. (Refer to Figure \ref{123} (ii-A).)

\smallskip
\textbf{Case B.} At most one of $s_1$, $s_2$ and $s_3$, say $s_1$, has degree $2$ in $G$.

Then $d(s_2),d(s_3)\geq3$. Since $s_1\in N(s_5)$,
by Lemma \ref{y-t} (a), at least one of $N(s_2)\cap S_1=\emptyset$ and $N(s_3)\cap S_1=\emptyset$, say $N(s_2)\cap S_1=\emptyset$, holds. So $N(s_2)\subseteq S_3\cup S_4\cup S_5$, and thus $d(s_2)=3$. This implies that $N(s_2)\cap S_4\neq\emptyset$ and $N(s_2)\cap S_3\neq\emptyset$. From the former, we get $N(s_2)\cap S_4=\{s_4\}$ while by the latter we can let $N(s_2)\cap S_3=\{r_3\}$. ($r_3=s_3$ is possible.) Since $s_3\in N(s_5)$, we get

\vspace{-5mm}
\begin{equation}
N(s_1)\cap S_3=\emptyset
\label{13e}
\end{equation}

\vspace{-1mm}
\noindent by Lemma \ref{y-t} (a). There are two subcases.

When $N(s_1)\cap S_2=\emptyset$, we have $N(s_1)\cap S_4\neq\emptyset$ since $d(s_1)\geq2$. By (\ref{123-4}), we have $N(s_1)\cap S_4=\{s_4\}$. So $N(s_1)=\{s_4,s_5\}$. Thus by Lemma \ref{y-t} (b), we have $N(s_4)\cap S_5=\emptyset$. Since $r_3\in N(s_2)$, we get $N(s_4)\cap S_3=\emptyset$ by Lemma \ref{y-t} (a), and so $N(s_4)=\{s_1,s_2\}$. If $r_3=s_3$, then $s_3$ is a cut-vertex of $G$, and so $G[\{s_1,s_2,s_3,s_4,s_5\}]$ is vertex-critical, which is not true. If $r_3\neq s_3$, then $\{r_3,s_3\}$ is a vertex-cut of $G$. (Refer to Figure \ref{1234} (ii-B1).) By Observation \ref{cc-s} and Theorem \ref{cc-c}$^\prime$,  $G[\{s_1,s_2,s_3,s_4,s_5,r_3\}]$ is vertex-critical, which is also not true.

When $N(s_1)\cap S_2\neq\emptyset$, by (\ref{13e}) and Lemma \ref{y-t} (b), we have $N(s_1)\cap S_4\neq\emptyset$, which implies that $N(s_1)\cap S_4=\{s_4\}$. Since $s_2\in N(s_5)$, we have $N(s_3)\cap S_2=\emptyset$ by  Lemma \ref{y-t} (a). So~$d(s_3)=3$, and thus $N(s_3)\cap S_1\neq\emptyset$ and $N(s_3)\cap S_4\neq\emptyset$. By (\ref{123-4}), we have $N(s_3)\cap S_4=\{s_4\}$. (Refer to Figure \ref{1234} (ii-B2).) Now, we have $r_3\in N(s_2)$, $N(s_4)\cap S_3=\{s_3\}$ and $N(s_5)\cap S_3=\{s_3\}$. However, according to Lemma \ref{y-t} (a), this is impossible.

\vspace{4mm}
\begin{figure}[h]
\centering
\begin{minipage}[t]{0.95\linewidth}
\centering
\includegraphics[width=0.95\textwidth]{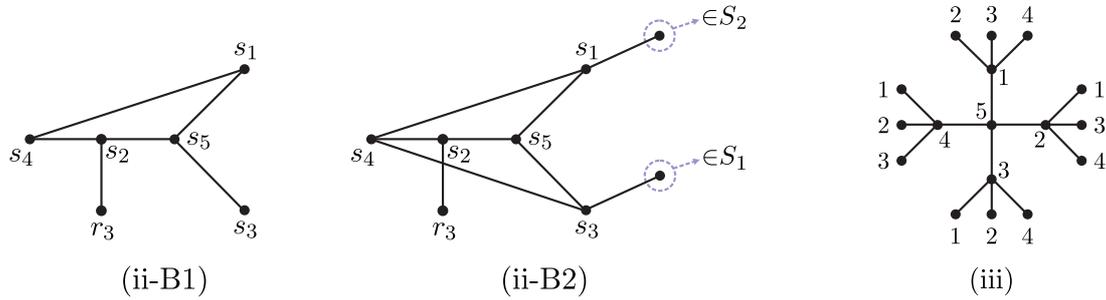}
\vspace{-2mm} \caption{Illustration for the proofs of Claim 2-B and Claim 3.}\label{1234}
\end{minipage}
\end{figure}

\smallskip
\emph{\textbf{Claim 3.}}~ $d(g)\neq4$ for every $g\in V(G)$.

Without loss of generality, suppose to the contrary that there exists some $s_5\in S_5$ such that $N(s_5)=\{s_1,s_2,s_3,s_4\}$, where $s_i\in S_i$, $i=1,2,3,4$. For every $1\leq i\leq 4$, by Lemma \ref{y-t} (b) and Claim 2, we have $d(s_i)\neq2$ and $d(s_i)\neq3$, which implies that $d(s_i)=4$. (Refer to Figure \ref{1234} (iii).) However, by Lemma \ref{y-t} (a), this is impossible.

\smallskip
By Claims 2 and 3, we get that $d_H(g)=2$ for every $g\in V(G)$,  which implies that $G$ is a cycle, a contradiction. The necessity follows, too.
\end{proof}
%

\vspace{-1mm}
\section{Conclusion}
\vspace{-1mm}
In \cite{Zhao}, the authors got  the following Proposition \ref{44-4}, which tells us that\hspace{1pt} $\mathscr{C}_4=\{C_4\}$,\hspace{1pt}  where \hspace{1pt}$\mathscr{C}_4$

\noindent was
defined in the last paragraph of Section 1. It is easy to see that the circulant graph $C_{12}\langle1,5\rangle$, the vertex coalescence $C_4\cdot C_4$ and  the Harary graph $H_{4,6}$ (see Figure \ref{HCE})  belong to $\mathscr{C}_6$. To compare Proposition \ref{44-4}, we want to know whether $\mathscr{C}_6$ is a finite set. So we present Problem \ref{66-6}.

\vspace{4mm}
\begin{figure}[h]
\centering
\begin{minipage}[t]{0.81\linewidth}
\centering
\includegraphics[width=0.82\textwidth]{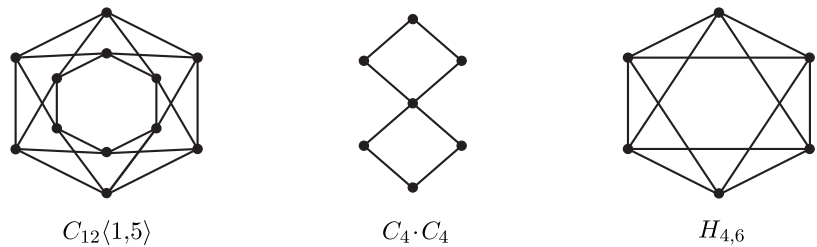}
\vspace{-2mm} \caption{Three elements of $\mathscr{C}_6$.}\label{HCE}
\end{minipage}
\end{figure}

\vspace{-1mm}
\begin{pro}\label{44-4}\emph{\cite{Zhao}}
Let $H$ be a connected graph. Then $V(H)$ can be partitioned into $4$   strong critical vertex-sets if and only if $H\cong C_4$.
\end{pro}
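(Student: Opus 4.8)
The plan is to prove both directions, with the forward direction carrying all the content. The sufficiency ($\Leftarrow$) is immediate: writing $C_4=v_1v_2v_3v_4$, we have $\gamma(C_4)=2$ and $\gamma(C_4-v_i)=\gamma(P_3)=1$, so each singleton $\{v_i\}$ is an st-critical vertex-set, and $\{v_1\},\{v_2\},\{v_3\},\{v_4\}$ is the desired partition. For the necessity ($\Rightarrow$), suppose $V(H)=S_1\cup S_2\cup S_3\cup S_4$ is a partition into st-critical vertex-sets. First I would record the standing constraints. By Observation \ref{cc-s}, every singleton $\{v\}\subseteq S_i$ is st-critical, so every vertex of $H$ is critical; hence $H$ is vertex-critical and, by Lemma \ref{Th334}, has no vertex of degree one, giving $d(v)\ge 2$ (there is no isolated vertex either, since $H$ is connected with at least four vertices). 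By Lemma \ref{cc-f} each $S_i$ is a $2$-packing, so for $v\in S_j$ we have $N(v)\cap S_j=\emptyset$ and $|N(v)\cap S_i|\le 1$ for each $i\neq j$; therefore $d(v)\le 3$. Thus $2\le d(v)\le 3$ for every $v\in V(H)$.

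Second, I would rule out degree-$3$ vertices. Suppose $s_4\in S_4$ has $d(s_4)=3$; by the $2$-packing bound $N(s_4)=\{s_1,s_2,s_3\}$ with exactly one $s_i\in S_i$. Applying Lemma \ref{y-t}(a) with $S=S_1$, $w=s_4$, $z=s_1$ produces a vertex of $N(s_4)-\{s_1\}=\{s_2,s_3\}$, say (after relabelling) $s_2$, with $N(s_2)\cap S_1=\emptyset$. Then $N(s_2)\subseteq S_3\cup S_4$, and the $2$-packing bound forces $N(s_2)\cap S_4=\{s_4\}$ and $|N(s_2)\cap S_3|\le 1$; since $d(s_2)\ge 2$ this yields $d(s_2)=2$ with $N(s_2)=\{s_4,r\}$ for some $r\in S_3$. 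Now $s_3-s_4-s_2-r$ is a path (a triangle if $r=s_3$) whose two ends lie in $S_3$ while its internal vertex $s_2\notin S_3$, so Lemma \ref{y-t}(b) gives $d(s_2)>2$, contradicting $d(s_2)=2$. Hence no vertex of $H$ has degree $3$.

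Third, combining the two previous steps, $d(v)=2$ for every $v$, so $H$ is a cycle $C_n$. I would finish by showing that in $C_n$ every st-critical vertex-set is a singleton. Let $S$ be an st-critical vertex-set of $C_n$ with $|S|=k$; being a $2$-packing, $S$ cuts $C_n$ into $k$ arcs (paths) of sizes $a_1,\dots,a_k\ge 2$ with $\sum_i a_i=n-k$, whence $\gamma(C_n-S)=\sum_i\lceil a_i/3\rceil\ge\lceil (n-k)/3\rceil$. Using the well-known values $\gamma(C_n)=\gamma(P_n)=\lceil n/3\rceil$, the st-critical condition demands $\sum_i\lceil a_i/3\rceil=\lceil n/3\rceil-k$; but $\lceil n/3\rceil-k-\lceil (n-k)/3\rceil\le (2-2k)/3<0$ whenever $k\ge 2$, a contradiction. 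Hence $k\le 1$, so every part $S_i$ is a singleton, and a partition of $V(H)$ into four nonempty singletons forces $|V(H)|=4$, i.e. $H\cong C_4$.

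I expect the two delicate points to be, first, the bookkeeping in the degree-$3$ elimination --- in particular verifying that the three named neighbours of $s_4$ genuinely come one from each of $S_1,S_2,S_3$, and handling the degenerate case $r=s_3$ (the triangle) so that Lemma \ref{y-t}(b) still applies --- and, second, getting the cycle inequality exactly right, since the whole reduction collapses onto the single estimate $\lceil n/3\rceil-k<\lceil (n-k)/3\rceil$ for $k\ge 2$. The remainder is routine given the lemmas already established, and notably this $l=4$ argument is tighter than the $l=5$ analysis of Theorem \ref{main}, so the cut-vertex/Theorem \ref{cc-c} machinery is not needed here.
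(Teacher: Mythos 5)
Your proof is correct, and there is a point worth making about the comparison: the paper never proves Proposition \ref{44-4} at all --- it is quoted from \cite{Zhao} without proof --- so the only internal relative is the necessity argument for $l=5$ in Theorem \ref{main}. Your argument follows that proof's skeleton (the $2$-packing property of Lemma \ref{cc-f} plus the degree-one exclusion of Lemma \ref{Th334} pin $2\le d(v)\le l-1$; Lemma \ref{y-t} kills the top degree; the problem collapses to a cycle), but, as you note, the $l=4$ case is genuinely lighter: a single application of Lemma \ref{y-t} (a) at a hypothetical degree-$3$ vertex $s_4$ forces a degree-$2$ neighbour $s_2$ with $N(s_2)=\{s_4,r\}$, $r\in S_3$, and Lemma \ref{y-t} (b) --- which explicitly permits $u=z$, so your degenerate triangle case $r=s_3$ is covered by the lemma as stated --- yields the contradiction; none of the cut-vertex/Theorem \ref{cc-c} machinery or the Claim-1 case analysis of the $l=5$ proof is needed. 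Your step 3 is also sound and is in effect the quantitative generalization of the paper's parenthetical remark in its $l=3$ case (that a cycle of order at least $4$ has no st-critical vertex-set of cardinality $2$): for a $2$-packing $S$ of $C_n$ with $|S|=k\ge2$, deletion leaves a disjoint union of paths, so $\gamma(C_n-S)=\sum_i\lceil a_i/3\rceil\ge\lceil (n-k)/3\rceil$, while st-criticality would force this to equal $\lceil n/3\rceil-k\le(n+2)/3-k<(n-k)/3$, a contradiction; hence every part is a singleton and $|V(H)|=4$, $H\cong C_4$. The bookkeeping you flagged checks out: the $2$-packing bound gives $s_4$ exactly one neighbour in each other class, $|N(s_2)\cap S_i|\le1$ forces $d(s_2)=2$, and $s_2\in V(G-S_3)$ as the hypothesis of Lemma \ref{y-t} requires, so there is no gap.
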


\begin{prob}\label{66-6}
Give a constructive characterization of the connected graphs $G$ such that $V(G)$ can be partitioned into $6$   strong critical vertex-sets of $G$.
\end{prob}

It is known that each   graph has a degree sequence, but a given sequence may be not a degree sequence of any  simple graph. For instance, the sequence (7,6,5,4,3,3,2) can not become a degree sequence of a simple graph (see \cite{Bondy}, Ex.~1.5.6). If $V(G)=S_1\cup S_2\cup\cdots\cup S_l$ is a strong critical vertex-sets partition of a  graph $G$, then we call the sequence $(|S_1|, |S_2|,\ldots, |S_l|)$ as a \emph{strong critical vertex-sets sequence} of $G$. It is noteworthy that even a connected graph may own different strong  critical vertex-sets sequences. For example, both (3,2,2,1,1,1,1,1,1) and (2,2,2,2,1,1,1,1,1) are strong  critical vertex-sets sequences of the graph depicted in Figure \ref{5-5-5}. Also, for connected graphs, it follows from Theorem \ref{main} that the strong critical vertex-sets sequence (1,1,1,1) exists but (1,1,1,1,2) does not exist.

\vspace{4pt}
\begin{figure}[h]
\centering
\begin{minipage}[t]{0.79\linewidth}
\centering
\includegraphics[width=0.82\textwidth]{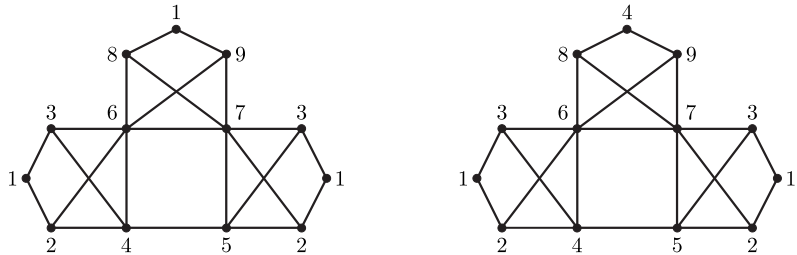}
\vspace{-2mm} \caption{A graph with more than one strong critical vertex-sets sequences.}\label{5-5-5}
\end{minipage}
\end{figure}

\vspace{-2mm}
\begin{prob}\label{qq-q}
What kinds of strong  critical vertex-sets sequences do exist\hspace{2pt}$?$ Or to be concrete about it, if $(|S_1|, |S_2|,\ldots, |S_l|)$ is a strong  critical vertex-sets sequence of a connected graph $G$, then what are the relations of $|S_1|, |S_2|,\ldots, |S_l|$, $l$ and $\gamma(G)$\hspace{1pt}$?$
\end{prob}

\vspace{1mm}
\section{Acknowledgement}

\vspace{-1mm}
Thank Professor D. F. Rall for his suggestion on the definition of strong critical vertex-set,  and e-mailing the article (ref.~\cite{Phillips}) to us\hspace{1pt}!

\end{document}